\numberwithin{equation}{section}
\newcommand\void[1]{}
\newcommand{\C}{\mathbb{C}}
\newcommand{\CA}{\mathcal{A}}
\newcommand{\CC}{\mathcal{C}}
\newcommand{\CD}{\mathcal{D}}
\newcommand{\CE}{\mathcal{E}}
\newcommand{\CF}{\mathcal{F}}
\newcommand{\CG}{\mathcal{G}}
\newcommand{\CL}{\mathcal{L}}
\newcommand{\CM}{\mathcal{M}}
\newcommand{\CN}{\mathcal{N}}
\newcommand{\CZ}{\mathcal{Z}}
\newcommand{\FC}{\mathfrak{C}}
\newcommand{\FD}{\mathfrak{D}}
\newcommand{\FE}{\mathfrak{E}}
\newcommand{\FF}{\mathfrak{F}}
\newcommand{\FG}{\mathfrak{G}}
\newcommand{\FL}{\mathfrak{L}}
\newcommand{\FM}{\mathfrak{M}}
\newcommand{\FN}{\mathfrak{N}}
\newcommand{\bk}{\mathbf{H}}
 \DeclareMathOperator{\Hom}{Hom}
 \DeclareMathOperator{\Id}{Id}
 \DeclareMathOperator{\Fun}{Fun}
\newcommand{\rev}{\mathrm{rev}}
\newcommand{\op}{\mathrm{op}}
\newcommand{\one}{\mathbf1}
\newtheorem{thm}{Theorem}[section]
\newtheorem{prop}[thm]{Proposition}
\newtheorem{cor}[thm]{Corollary}
\newtheorem{prop-defn}[thm]{Proposition-Definition}
\theoremstyle{definition}
\newtheorem{defn}[thm]{Definition}
\newtheorem{exam}[thm]{Example}
\newtheorem{rem}[thm]{Remark}
\theoremstyle{remark}
\begin{document}

\title[Extended TQFT]{Extended TQFT arising from enriched multi-fusion categories}
\author{Hao Zheng}
\address{Department of Mathematics, Peking University, Beijing 100871, China}
\email{hzheng@math.pku.edu.cn}
\maketitle

\begin{abstract}
We define a symmetric monoidal (4,3)-category with duals whose objects are certain enriched multi-fusion categories.
For every modular tensor category $\CC$, there is a self enriched multi-fusion category $\FC$ giving rise to an object of this symmetric monoidal (4,3)-category.
We conjecture that the extended 3D TQFT given by the fully dualizable object $\FC$  extends the 1-2-3-dimensional Reshetikhin-Turaev TQFT associated to the modular tensor category $\CC$ down to dimension zero.
\end{abstract}

\section{Introduction}

In his seminal paper \cite{W2}, Witten gave an explanation of the Jones polynomial \cite{J1,J2} in terms of 3D Chern-Simons theory and generalized the Jones polynomial to invariants of 3-manifolds (with ribbon links inside). The corresponding mathematical theory is the Reshetikhin-Turaev invariant \cite{RT1,RT2} which is defined for any modular tensor category $\CC$ including those arising from Chern-Simons theory. Unlike many other topological invariants, the Reshetikhin-Turaev invariant essentially involves a framing anomaly which has already been observed in Witten's work.
But rather, the double theory of the Reshetikhin-Turaev invariant does not suffer from such an anomaly and, in fact, coincides with the Turaev-Viro invariant of 3-manifolds associated to the same modular tensor category $\CC$ \cite{TV,Tu}. Moreover, the Reshetikhin-Turaev invariant turns out to be a boundary of a 4D theory \cite{BGM}, the Crane-Yetter invariant of 4-manifolds \cite{CY,CKY} associated to the modular tensor category $\CC$.

\smallskip

These invariants are naturally settled in the framework of TQFT introduced in \cite{W1,At} and extended TQFT developed in, for example, \cite{La,Fr1,BD,Lu}. The Reshetikhin-Turaev invariant is incorporated in a beautiful 1-2-3-dimensional extended TQFT $\CZ^{RT}_\CC$ where the 1-2-dimensional theory is formulated in terms of modular functor and Moore-Seiberg data (see \cite{Tu,BK} and references therein, see also \cite{Fr3}). If $\CC$ arises from Chern-Simons theory with finite gauge group (known as Dijkgraaf-Witten theory), the theory can be extended even down to dimensional zero \cite{Fr2}.

Extended TQFT for Turaev-Viro invariant and Crane-Yetter invariant were built from various point of view (see, for example, \cite{BaK,dss1,Fr3}).
In fact, both invariants could be extended all the way down to dimension zero. According to the cobordism hypothesis, which is proposed by Baez and Dolan \cite{BD} and proved in sketch by Lurie \cite{Lu} (see also \cite{Fr4}), a fully extended TQFT is determined by its value on a point. By realizing the modular tensor category $\CC$ as a fully dualizable object in a certain symmetric monoidal 3-category, one obtains a fully extended 3D TQFT $\CZ^{TV}_\CC$ whose values on 3-manifolds give (modulo certain reasonable conjectures) the Turaev-Viro invariant \cite{dss1}. Similarly, by realizing the modular tensor category $\CC$ as a fully dualizable object in a certain symmetric monoidal 4-category, one obtains a fully extended 4D TQFT $\CA_\CC$ whose values on 4-manifolds should recover the Crane-Yetter invariant \cite{Fr3}. As before, the double theory of $\CZ^{RT}_\CC$ agrees with $\CZ^{TV}_\CC$ \cite{Ba2}, and $\CZ^{RT}_\CC$ is a boundary theory of $\CA_\CC$ \cite{Fr3}. Although there still remain substantial works to be fulfilled to make many of these statements rigorous, the global picture has been fairly clear.

\smallskip

It is natural to ask whether the Reshetikhin-Turaev TQFT $\CZ^{RT}_\CC$ can be extended further to dimensional zero. According to the cobordism hypothesis, this is equivalent to ask what mathematical object should be assigned to a point. Experiences on TQFT told us this amounts to find a mathematical object whose ``center'' is the modular tensor category $\CC$.
For example, if $\CC$ is the Drinfeld center of a spherical fusion category $\CD$, such an extension is available. In fact, the Turaev-Viro invariant is also defined for a spherical fusion category \cite{Oc,BW} and it is known that $\CZ^{RT}_{Z(\CD)}$ is equivalent to $\CZ^{TV}_\CD$ \cite{Ba1,Ba2,TVi}.
But for a general modular tensor category $\CC$, this question remains open. See \cite{FHLT} for a treatment where $\CC$ arises from Chern-Simons theory with torus gauge group.
Recently, Henriques also proposed a candidate by showing that certain unitary modular tensor categories (completed by separable Hilbert spaces) are the Drinfeld centers of certain categories of solitons \cite{He1,He2}.

\smallskip

In this paper, we propose another approach to this problem. We introduce the notions of enriched multi-fusion categories (see \cite{MP} for a definition of monoidal category enriched in a braided monoidal category) and their bimodules, and show their dualities. In particular, a modular tensor category $\CC$ gives rise to a self enriched multi-fusion category $\FC=(\CC,\bar\CC)$ whose Drinfeld center was shown in \cite{KZ2} to be equivalent to $\CC$ itself. After these concrete mathematical results are established, we organize the enriched multi-fusion categories and their bimodules into a symmetric monoidal $(4,3)$-category with duals (see Theorem \ref{thm:dual}), and argue that the extended 3D TQFT $\CZ_\CC$ given by the fully dualizable object $\FC$ provides a way to extend the Reshetikhin-Turaev TQFT $\CZ^{RT}_\CC$ down to dimension zero. Actually, we will argue that the 1-2-3-dimensional theory of $\CZ_\CC$ is a combination of those of $\CZ^{RT}_\CC$ and $\CA_\CC$ and show that the double theory of $\CZ_\CC$ is equivalent to $\CZ^{TV}_\CC$.

\bigskip
\noindent {\bf Acknowledgement.} Work is supported by NSFC under Grant No. 11131008. The author would like to thank Liang Kong for many enlightening discussions.

\section{Unitary categories}

In this section, we recall some basic facts about unitary multi-fusion categories and unitary modular tensor categories, and set our notations.


\smallskip

A {\em $*$-category} $\CC$ is a $\C$-linear category equipped a $*$-operation on morphism, i.e. $*:\Hom_\CC(x,y) \to \Hom_\CC(y,x)$ is defined so that $(g \circ f)^* = f^* \circ g^*$, $(\lambda f)^* = \bar{\lambda} f^*$, $f^{**} = f$ for $f\in \Hom_\CC(x,y)$, $g\in \Hom_\CC(y,z)$, $\lambda \in \C$.
An isomorphism $f$ in $\CC$ is {\em unitary} if $f^*=f^{-1}$.
A {\em $*$-functor} $F: \CC\to \CD$ between two $*$-categories is a $\C$-linear functor such that $F(f^*)=F(f)^*$ for all morphisms $f$ in $\CC$.

An example of $*$-category is the category of finite-dimensional Hilbert spaces, denoted by $\bk$. A {\em unitary category} is a $*$-category $\CC$ which is equivalent via a $*$-functor to a finite direct sum of $\bk$.
We always assume a functor $F: \CC\to \CD$ between two unitary categories is an additive $*$-functor.
The Deligne tensor product $\CC\boxtimes\CD$ \cite{deligne} of two unitary categories $\CC$ and $\CD$ is automatically unitary.

\smallskip

A {\em unitary multi-fusion category} (UMFC) is a rigid monoidal category $\CC$ such that $\CC$ is equipped with the structure of a unitary category, the tensor product is an additive $*$-functor $\otimes: \CC \boxtimes \CC \to \CC$ and the natural isomorphisms $\one\otimes a \simeq a$,  $a\otimes \one \simeq a$, $(a\otimes b) \otimes c \simeq a\otimes (b\otimes c)$ are unitary.
In this case, the left dual and the right dual of an object $a\in\CC$ are canonically identified, which we denote as $a^*$.
We always assume a monoidal functor $F:\CC\to\CD$ between two UMFC's is such that $F$ is an additive $*$-functor and the natural isomorphisms $F(\one_\CC)\simeq\one_\CD$, $F(a\otimes b)\simeq F(a)\otimes F(b)$ are unitary.

A UMFC is called a {\em unitary fusion category} if the tensor unit $\one$ is a simple object. A UMFC is {\em indecomposable} (IUMFC for short) if it is neither zero nor a direct sum of two nonzero UMFC's.
Given a UMFC $\CC$, we use $\CC^\rev$ to denote the UMFC which has the same underlying category $\CC$ but equipped with the reversed tensor product $a\otimes^\rev b := b\otimes a$.

\smallskip

We always assume a left module $\CM$ (see \cite{ostrik}) over a UMFC $\CC$ is such that $\CM$ is a unitary category, the left $\CC$-action is an additive $*$-functor $\odot:\CC\boxtimes\CM\to\CM$ and the natural isomorphisms $\one\odot x\simeq x$, $(a\otimes b)\odot x \simeq a\odot(b\odot x)$ are unitary; and assume a left $\CC$-module functor $F:\CM\to\CN$ is such that $F$ is an additive $*$-functor and the natural isomorphisms $F(a\odot x)\simeq a\odot F(x)$ are unitary; and make similar assumptions on right modules and bimodules over UMFC's

Let $\CC,\CD$ be UMFC's. Given left $\CC$-modules $\CM$ and $\CN$, we use $\Fun_\CC(\CM,\CN)$ to denote the category of left $\CC$-module functors. It is also a unitary category \cite{eno2005,ghr}. Moreover, $\Fun_\CC(\CM,\CM)$ is a UMFC \cite{eno2005,ghr}.
In the special case $\CC=\bk$, we simply denote $\Fun_\bk(\CM,\CN)$ as $\Fun(\CM,\CN)$.
Given $\CC$-$\CD$-bimodules (equivalently, left $\CC\boxtimes\CD^\rev$-modules) $\CM$ and $\CN$, we use $\Fun_{\CC|\CD}(\CM,\CN)$ to denote the category of $\CC$-$\CD$-bimodule functors.

Given a $\CC$-$\CD$-bimodule $\CM$, the opposite category $\CM^\op$ is automatically a $\CD$-$\CC$-bimodule with the induced action $b\odot x\odot a := a^* \odot x \odot b^*$ for $a\in\CC$, $x\in\CM^\op$, $b\in\CD$. Note that the functor $a\mapsto a^*$ defines an equivalence of $\CC$-$\CC$-bimodules $\CC\simeq\CC^\op$.

\smallskip

For a left module $\CM$ over a UMFC $\CC$, the {\em internal hom} $[x,-]_\CC: \CM \to \CC$ for $x\in \CM$ is defined to be the right adjoint functor of $-\odot x: \CC \to \CM$, i.e. $\Hom_\CM(a\odot x, y)\simeq \Hom_\CC(a, [x,y]_\CC)$ for $a\in\CC,y\in\CM$. Since unitary categories are semisimple, such an internal hom always exists.

\smallskip

Let $\CC$ be a UMFC and let $\CM$ be a right $\CC$-module, $\CN$ a left $\CC$-module. The {\em tensor product} $\CM\boxtimes_\CC\CN$ is the universal unitary category that is equipped with a functor $\boxtimes_\CC:\CM\boxtimes\CN \to \CM\boxtimes_\CC\CN$ intertwining the $\CC$-actions (see, for example, \cite{tam,eno2009,dss2}). In the special case $\CC=\bk$, $\CM\boxtimes_\bk\CN$ is just the Deligne tensor product $\CM\boxtimes \CN$.

It turns out that the tensor product $\CM\boxtimes_\CC\CN$ always exists and is equivalent to $\Fun_\CC(\CM^\op,\CN)$ \cite{eno2009}. More explicitly, there is a canonical equivalence for left $\CC$-modules $\CM,\CN$ \cite[Corollary 2.2.5(1)]{KZ1}:
\begin{equation}\label{eq:xRy}
  \CM^\op\boxtimes_\CC\CN \simeq \Fun_\CC(\CM,\CN) \quad\text{defined by}\quad
  x\boxtimes_\CC y\mapsto [-,x]_\CC^*\odot y.
\end{equation}
Moreover, there is a canonical equivalence for a right $\CC$-module $\CM$ and a left $\CC$-module $\CN$ \cite[Corollary 2.2.5(3)]{KZ1}:
\begin{equation}\label{eq:op}
  (\CM\boxtimes_\CC\CN)^\op \simeq \CN^\op\boxtimes_\CC\CM^\op \quad\text{defined by}\quad
  x\boxtimes_\CC y\mapsto y\boxtimes_\CC x.
\end{equation}

\smallskip

Let $\CC,\CD$ be UMFC's and let $\CM$ be a $\CC$-$\CD$-bimodule, $\CN$ a $\CD$-$\CC$-bimodule. We say that $\CM$ is {\em left dual} to $\CN$ and $\CN$ is {\em right dual} to $\CM$, if there exist a $\CC$-$\CC$-bimodule functor $u:\CD\to\CN\boxtimes_\CC\CM$ and a $\CD$-$\CD$-bimodule functor $v:\CM\boxtimes_\CD\CN\to\CC$ such that the composite bimodule functors
$$\CM \simeq \CM\boxtimes_\CD\CD \xrightarrow{\Id_\CM\boxtimes_\CD u} \CM\boxtimes_\CD\CN\boxtimes_\CC\CM \xrightarrow{v\boxtimes_\CC\Id_\CM} \CC\boxtimes_\CC\CM \simeq \CM,$$
$$\CN \simeq \CD\boxtimes_\CD\CN \xrightarrow{u\boxtimes_\CD\Id_\CN} \CN\boxtimes_\CC\CM\boxtimes_\CD\CN \xrightarrow{\Id_\CN\boxtimes_\CC v} \CN\boxtimes_\CC\CC \simeq \CN$$
are isomorphic to the identity functors.
In the special case where $u,v$ are equivalences, we say that the bimodules $\CM,\CN$ are {\em invertible} and say that the UMFC's $\CC,\CD$ are {\em Morita equivalent}.

It was shown in \cite{dss1} that the left dual and the right dual of a $\CC$-$\CD$-bimodule $\CM$ are given by $\Fun_{\CD^\rev}(\CM,\CD)$ and $\Fun_\CC(\CM,\CC)$, respectively. Given below is another form of this result which might be useful for computation (see \cite[Theorem 4.6]{AKZ}).

\begin{thm} \label{thm:dual-mod-cat}
Let $\CC,\CD$ be UMFC's and let $\CM$ be a $\CC$-$\CD$-bimodule. Then the $\CD$-$\CC$-bimodule $\CM^\op$ is right dual to $\CM$ with two duality maps $u$ and $v$ defined as follows:
\begin{align*}
& u: \CD \to \Fun_\CC(\CM,\CM)\simeq \CM^\op\boxtimes_\CC\CM, \quad\quad  d\mapsto  -\odot d,    \\
& v: \CM\boxtimes_\CD\CM^\op\to \CC, \quad\quad\quad\quad\quad\quad\quad\quad  x\boxtimes_\CD y \mapsto [x,y]_\CC^*.
\end{align*}
Since $(\CM^\op)^\op \simeq \CM$ as $\CC$-$\CD$-bimodules, $\CM^\op$ is also left dual to $\CM$.
\end{thm}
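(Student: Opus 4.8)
The plan is to verify directly the two ``zigzag'' identities that, by the definition recalled above, exhibit the $\CD$-$\CC$-bimodule $\CM^\op$ as a right dual of $\CM$; the only ingredients are the internal hom calculus and the equivalence \eqref{eq:xRy}. One first checks that $u$ and $v$ are well-defined bimodule functors. For $u$ this is essentially formal: since the left $\CC$-action and the right $\CD$-action on $\CM$ commute coherently, each $-\odot d\colon\CM\to\CM$ is a left $\CC$-module functor, so $d\mapsto -\odot d$ is a functor $\CD\to\Fun_\CC(\CM,\CM)$ intertwining the two one-sided $\CD$-actions on $\Fun_\CC(\CM,\CM)$ given by pre- and post-composition with $-\odot d$; composing with the equivalence in \eqref{eq:xRy} gives $u$. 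For $v$: the assignment $(x,y)\mapsto[x,y]_\CC^*$ is a functor $\CM\times\CM^\op\to\CC$, covariant in both variables since $*$ reverses the variance of $[-,-]_\CC$ in each slot, and it is balanced over $\CD$ because $[x\odot d,y]_\CC\simeq[x,y\odot d^*]_\CC$ naturally, an isomorphism that comes from the adjunction between $-\odot d$ and $-\odot d^*$ on $\CM$ together with the defining adjunction of the internal hom; hence $v$ descends to $\CM\boxtimes_\CD\CM^\op\to\CC$, and its equivariance is a routine check.

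Next, let $R\in\CM^\op\boxtimes_\CC\CM$ be the object that \eqref{eq:xRy} carries to $\Id_\CM=u(\one)$, and write $R\simeq\bigoplus_i x_i\boxtimes_\CC z_i$ (possible since every object of $\CM^\op\boxtimes_\CC\CM$ is a finite direct sum of elementary tensors); this presentation is exactly the statement that $\bigoplus_i[m,x_i]_\CC^*\odot z_i\simeq m$ naturally in $m\in\CM$. The first zigzag is then immediate: chasing $m$ along
$$\CM\simeq\CM\boxtimes_\CD\CD\xrightarrow{\ \Id\boxtimes u\ }\CM\boxtimes_\CD\CM^\op\boxtimes_\CC\CM\xrightarrow{\ v\boxtimes\Id\ }\CC\boxtimes_\CC\CM\simeq\CM$$
produces $m\mapsto\bigoplus_i m\boxtimes_\CD x_i\boxtimes_\CC z_i\mapsto\bigoplus_i[m,x_i]_\CC^*\boxtimes_\CC z_i\mapsto\bigoplus_i[m,x_i]_\CC^*\odot z_i\simeq m$, and since every isomorphism used is natural and compatible with the module actions, this identifies the composite with $\Id_\CM$ as a $\CC$-$\CD$-bimodule functor.

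The main obstacle is the second zigzag. Chasing $y\in\CM^\op$ along $\CM^\op\simeq\CD\boxtimes_\CD\CM^\op\xrightarrow{u\boxtimes\Id}\CM^\op\boxtimes_\CC\CM\boxtimes_\CD\CM^\op\xrightarrow{\Id\boxtimes v}\CM^\op\boxtimes_\CC\CC\simeq\CM^\op$ and using that the right $\CC$-action on $\CM^\op$ sends $x\boxtimes_\CC c$ to $c^*\odot x$, one is reduced to proving
$$\bigoplus_i[z_i,y]_\CC\odot x_i\ \simeq\ y\qquad\text{in }\CM,$$
which is ``dual'' to the defining property of $R$ but not a formal consequence of it. I would deduce it by two applications of the internal hom. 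First, applying $[-,y]_\CC$ to $\bigoplus_i[m,x_i]_\CC^*\odot z_i\simeq m$ and using $[a\odot z,w]_\CC\simeq[z,w]_\CC\otimes a^*$ gives the natural isomorphism $[m,y]_\CC\simeq\bigoplus_i[z_i,y]_\CC\otimes[m,x_i]_\CC$. Second, applying $[m,-]_\CC$ to the displayed left-hand side and using $[m,c\odot x]_\CC\simeq c\otimes[m,x]_\CC$ gives $[m,\bigoplus_i[z_i,y]_\CC\odot x_i]_\CC\simeq\bigoplus_i[z_i,y]_\CC\otimes[m,x_i]_\CC\simeq[m,y]_\CC$ for every $m$; since $\Hom_\CM(m,N)\simeq\Hom_\CC(\one,[m,N]_\CC)$, a Yoneda argument then forces $\bigoplus_i[z_i,y]_\CC\odot x_i\simeq y$, naturally in $y$. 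Tracking naturality and the bimodule structures through these steps is routine. The final sentence of the statement now follows at once, since the canonical identification $(\CM^\op)^\op\simeq\CM$ of $\CC$-$\CD$-bimodules turns ``$\CM^\op$ is right dual to $\CM$'' into ``$\CM^\op$ is left dual to $\CM$''.

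An alternative route that avoids the second zigzag is to invoke the theorem of \cite{dss1} that $\Fun_\CC(\CM,\CC)$ is a right dual of $\CM$, to identify $\CM^\op\simeq\Fun_\CC(\CM,\CC)$ as $\CD$-$\CC$-bimodules via the case $\CN=\CC$ of \eqref{eq:xRy} together with $\CM^\op\simeq\CM^\op\boxtimes_\CC\CC$, and then to transport the two duality maps along this equivalence; by uniqueness of right duals the transported maps agree, up to canonical isomorphism, with the $u$ and $v$ in the statement, although verifying this matching still requires internal-hom bookkeeping of the same flavour.
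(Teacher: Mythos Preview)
The paper does not supply its own proof here: the theorem is stated with a reference to \cite[Theorem~4.6]{AKZ}, and the sentence preceding it points to \cite{dss1} for the underlying dualizability statement. So there is no in-text argument to compare against. Your direct verification of the zigzag identities via internal-hom calculus is a reasonable self-contained route and is, in outline, correct; the ``alternative route'' you sketch at the end is exactly the reduction to \cite{dss1} that the paper has in mind.

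One technical point needs repair. The parenthetical ``every object of $\CM^\op\boxtimes_\CC\CM$ is a finite direct sum of elementary tensors'' is false in general: objects of a relative tensor product are only \emph{retracts} of such sums. For a small example, take $\CM=\bk$ over any unitary fusion category $\CC$ with more than one simple object; every elementary tensor in $\CM^\op\boxtimes_\CC\CM$ is then a multiple of the single object $\C\boxtimes_\CC\C$, while $\Fun_\CC(\bk,\bk)$ is a nontrivial fusion category, so your $R=u(\one)$, which corresponds to $\Id_\bk$, is only a proper summand of $\C\boxtimes_\CC\C$. This does not wreck the proof---all the functors you apply preserve retracts, so one can carry an idempotent through---but the cleaner fix is to avoid the decomposition altogether. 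For the first zigzag, observe that for each fixed $m$ the composite $\CM^\op\boxtimes_\CC\CM\to\CM$ agrees on elementary tensors with evaluation-at-$m$ under \eqref{eq:xRy}, hence equals it, and therefore sends $R$ to $\Id_\CM(m)=m$. Your internal-hom and Yoneda steps for the second zigzag can be recast in the same spirit, as identities between functors out of $\CM^\op\boxtimes_\CC\CM$ rather than as formulas in a chosen decomposition of $R$.
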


\smallskip


A {\em unitary braided fusion category} is a unitary fusion category $\CC$ equipped with a braiding $c_{a,b}: a\otimes b\to b\otimes a$ such that the isomorphisms $c_{a,b}$ are unitary.
We use $\bar\CC$ to denote the same unitary fusion category $\CC$ but equipped with the anti-braiding $\bar c_{a,b} := c^{-1}_{b,a}$.

The {\em centralizer} of a subcategory $\CE$ in a unitary braided fusion category $\CC$, denoted by $\CE'$, is the full subcategory of $\CC$ consisting of those objects $x$ such that $c_{y,x}\circ c_{x,y}=\Id_{x\otimes y}$ for all $y\in\CE$. The centralizer $\CC'$ of $\CC$ itself is referred to as the {\em M\"{u}ger center} of $\CC$.
A unitary braided fusion category has a canonical spherical structure \cite{kitaev}.
A {\em unitary modular tensor category} (UMTC) is a unitary braided fusion category, equipped with the canonical spherical structure, such that $\CC' \simeq \bk$.


If $\CC$ is an IUMFC, the Drinfeld center $Z(\CC)$ is a UMTC \cite{Mu2}. Note that the UMTC $Z(\CC^\rev)$ is identical to $\overline{Z(\CC)}$. If $\CC$ is a UMTC, then the canonical embeddings $\CC,\bar\CC\hookrightarrow Z(\CC)$ induce a braided monoidal equivalence $\CC\boxtimes\bar\CC \simeq Z(\CC)$ \cite{Mu2}. If $\CE\hookrightarrow\CC$ is a fully faithful braided monoidal functor between UMTC's, the centralizer $\CE'$ of $\CE$ in $\CC$ is also a UMTC and we have $\CC\simeq\CE\boxtimes\CE'$ \cite[Theorem 4.2]{Mu3} (see also \cite{DGNO}).

\smallskip


Let $\CC,\CD$ be UMTC's. A {\em multi-fusion $\CC$-$\CD$-bimodule} is a UMFC $\CM$ equipped with a braided monoidal functor $\phi_\CM:\bar\CC\boxtimes \CD\to Z(\CM)$.
(The standard terminology should be a UMFC over $\bar\CC\boxtimes\CD$, however, this nonstandard one is sometimes more convenient.)
A multi-fusion $\CC$-$\CD$-bimodule $\CM$ is said to be {\em closed} if $\phi_\CM$ is an equivalence.
We say that two multi-fusion $\CC$-$\CD$-bimodules $\CM$ and $\CN$ are {\em equivalent} if there is a monoidal equivalence $\CM\simeq\CN$ such that the composition of $\phi_\CM$ with the induced braided monoidal equivalence $Z(\CM)\simeq Z(\CN)$ is isomorphic to $\phi_\CN$.

Let $\CC,\CD,\CE$ be UMTC's, and let $\CM$ be a multi-fusion $\CC$-$\CD$-bimodule, $\CN$ a multi-fusion $\CD$-$\CE$-bimodule. The category $\CM\boxtimes_\CD \CN$ has a natural structure of a UMFC with the tensor product $(a\boxtimes_\CD b) \otimes (c\boxtimes_\CD d) := (a\otimes c)\boxtimes_\CD(b\otimes d)$. Moreover, $(\CM\boxtimes_\CD\CN)^\rev$ can be identified with $\CN^\rev\boxtimes_\CD\CM^\rev$.

\begin{thm}[\cite{KZ1} Theorem 3.3.6] \label{thm:closed}
Let $\CC,\CD,\CE$ be UMTC's. If $\CM$ is a closed multi-fusion $\CC$-$\CD$-bimodule and $\CN$ is a closed multi-fusion $\CD$-$\CE$-bimodule, then $\CM\boxtimes_\CD \CN$ is a closed multi-fusion $\CC$-$\CE$-bimodule.
\end{thm}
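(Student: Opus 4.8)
The statement to prove is that the structure functor $\phi_{\CM\boxtimes_\CD\CN}\colon\bar\CC\boxtimes\CE\to Z(\CM\boxtimes_\CD\CN)$ is a braided monoidal equivalence; recall that $\phi_{\CM\boxtimes_\CD\CN}$ is built from $\phi_\CM$ and $\phi_\CN$ in the evident way, an object of $\bar\CC$ being carried by $\phi_\CM$ to a central object of $\CM$ that descends to a central object of $\CM\boxtimes_\CD\CN$ precisely because closedness of $\CM$ makes $\phi_\CM(\bar\CC)$ the centralizer of $\phi_\CM(\CD)$ inside $Z(\CM)\simeq\bar\CC\boxtimes\CD$, and symmetrically for $\CE$ using closedness of $\CN$. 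The plan is to realize $\CM\boxtimes_\CD\CN$ as a category of modules over a canonical (Lagrangian) algebra and then read off its Drinfeld center, the $\CD$ inside $\CM$ ``cancelling'' the $\bar\CD$ inside $\CN$ and leaving $\bar\CC\boxtimes\CE$.

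Concretely: since $\CD$ is a UMTC we have the braided equivalence $Z(\CD)\simeq\CD\boxtimes\bar\CD$, and we let $L_\CD\in Z(\CD)$ be the canonical connected étale (Lagrangian) algebra, for which $\Mod^{\mathrm{loc}}_{L_\CD}(Z(\CD))\simeq\bk$ — its category of local modules is trivial because it is Lagrangian in the nondegenerate braided category $Z(\CD)$ (see \cite{DGNO}). The braided functors $\phi_\CM|_\CD\colon\CD\to Z(\CM)$ and $\phi_\CN|_{\bar\CD}\colon\bar\CD\to Z(\CN)$ combine into a braided monoidal functor $Z(\CD)=\CD\boxtimes\bar\CD\to Z(\CM)\boxtimes Z(\CN)=Z(\CM\boxtimes\CN)$, and we take $A$ to be the image of $L_\CD$, a connected étale (hence central) algebra in the UMFC $\CM\boxtimes\CN$. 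I would then invoke two standard inputs, valid in the present unitary setting because every UMFC is semisimple, so all algebras in sight are separable and all categories of modules and functors remain unitary — exactly as the text already uses for $\Fun_\CC(\CM,\CM)$ and for $\boxtimes_\CC$. First, the $\CD$-balancing that defines $\boxtimes_\CD$ for the $\CD$-central UMFCs $\CM,\CN$ (the central structures being $\phi_\CM|_\CD$ and $\phi_\CN|_{\bar\CD}$) is implemented by passage to $A$-modules, giving an equivalence of UMFCs $\CM\boxtimes_\CD\CN\simeq\Mod_A(\CM\boxtimes\CN)$ compatible with the tensor products (cf.\ \cite{eno2009,dss2}). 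Second, for a connected étale algebra $A$ in $Z(\CP)$ one has a braided equivalence $Z(\Mod_A(\CP))\simeq\Mod^{\mathrm{loc}}_A(Z(\CP))$. Applying these with $\CP=\CM\boxtimes\CN$ and using closedness to identify $Z(\CM)\boxtimes Z(\CN)\simeq\bar\CC\boxtimes(\CD\boxtimes\bar\CD)\boxtimes\CE$, under which $A$ corresponds to $\one_{\bar\CC}\boxtimes L_\CD\boxtimes\one_\CE$, and recalling that local modules over an algebra supported on a single Deligne factor are computed factorwise, one gets
\[
Z(\CM\boxtimes_\CD\CN)\;\simeq\;\bar\CC\boxtimes\Mod^{\mathrm{loc}}_{L_\CD}(\CD\boxtimes\bar\CD)\boxtimes\CE\;\simeq\;\bar\CC\boxtimes\bk\boxtimes\CE\;=\;\bar\CC\boxtimes\CE .
\]
Tracing the constructions shows this equivalence is the one induced by $\phi_{\CM\boxtimes_\CD\CN}$, so the latter is an equivalence and $\CM\boxtimes_\CD\CN$ is closed; in particular $Z(\CM\boxtimes_\CD\CN)$ then has simple unit, whence $\CM\boxtimes_\CD\CN$ is an IUMFC, as it must be.

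The main obstacle is the bookkeeping hidden in the two standard inputs: one must match $\CM\boxtimes_\CD\CN$ with $\Mod_A(\CM\boxtimes\CN)$ not merely as categories but as monoidal categories — matching $(a\boxtimes_\CD b)\otimes(c\boxtimes_\CD d)=(a\otimes c)\boxtimes_\CD(b\otimes d)$ with the relative tensor over $A$ — and one must transport the half-braidings through $Z(\Mod_A(\CP))\simeq\Mod^{\mathrm{loc}}_A(Z(\CP))$ carefully enough that the resulting braided monoidal equivalence $Z(\CM\boxtimes_\CD\CN)\simeq\bar\CC\boxtimes\CE$ is seen to be exactly the one induced by $\phi_{\CM\boxtimes_\CD\CN}$, and not merely some abstract equivalence. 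A route sidestepping étale algebras would instead compute $Z(\CM\boxtimes_\CD\CN)$ directly from its description as the category of $(\CM\boxtimes_\CD\CN)$-bimodule endofunctors, using the universal property of $\boxtimes_\CD$ together with \eqref{eq:xRy} and Theorem~\ref{thm:dual-mod-cat}; the half-braiding combinatorics is essentially the same, and in either approach this identification is the step that consumes the real work.
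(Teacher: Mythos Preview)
The paper does not prove this theorem; it is stated as a citation to \cite{KZ1}, Theorem~3.3.6, with no argument supplied here. So there is no in-paper proof to compare your proposal against.

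That said, your sketch is a sound and standard route: push the Lagrangian algebra $L_\CD\in\CD\boxtimes\bar\CD\simeq Z(\CD)$ into $Z(\CM\boxtimes\CN)$ via $\phi_\CM|_\CD\boxtimes\phi_\CN|_{\bar\CD}$, identify $\CM\boxtimes_\CD\CN$ with right $A$-modules in $\CM\boxtimes\CN$, and then use the Schauenburg-type equivalence $Z(\Mod_A(\CP))\simeq\Mod^{\mathrm{loc}}_A(Z(\CP))$ together with the Lagrangian condition $\Mod^{\mathrm{loc}}_{L_\CD}(\CD\boxtimes\bar\CD)\simeq\bk$. The caveats you raise at the end are exactly the places where care is needed, and you have identified them correctly; none of them is a genuine obstruction in the unitary (hence separable, semisimple) setting. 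The argument in \cite{KZ1} is organized differently, through the functor-category formalism that underlies Theorem~\ref{thm:kz} and equation~\eqref{eq:MMNN} rather than through \'etale algebras and local modules; your approach sits closer to the \cite{dmno} circle of ideas. Both are legitimate and neither is clearly shorter once the monoidal and braided compatibilities are spelled out, though the functor-category route integrates more seamlessly with how the present paper uses the result downstream.
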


We recall the main result in \cite{KZ1} that can be generalized to the unitary case automatically. Let $\mathbf{IUMFC}$ be the category of IUMFC's with morphisms given by the equivalence classes of nonzero bimodules. Let $\mathbf{UMTC}$ be the category of UMTC's with morphisms given by the equivalence classes of closed multi-fusion bimodules. The composition law of both categories is tensor product of bimodules.

\begin{thm}[\cite{KZ1} Theorem 3.3.7] \label{thm:kz}
There is a well-defined functor $Z: \mathbf{IUMFC} \to \mathbf{UMTC}$ given by $\CC \mapsto Z(\CC)$ on object and ${}_\CC \CM_\CD \mapsto \Fun_{\CC|\CD}(\CM,\CM)$ on morphism. Moreover, the functor $Z$ is fully faithful.
\end{thm}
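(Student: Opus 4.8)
The plan is to follow the proof of the non-unitary prototype \cite[Theorem 3.3.7]{KZ1}, invoking only inputs whose unitary refinements are recorded in Section 2, and then to observe that $*$-structures propagate automatically. On objects there is nothing to do: $Z(\CC)$ is a UMTC for every IUMFC $\CC$ \cite{Mu2}. For a morphism, given IUMFC's $\CC,\CD$ and a nonzero $\CC$-$\CD$-bimodule $\CM$, I would set $\CA:=\CC\boxtimes\CD^\rev$ and view $\CM$ as a nonzero left $\CA$-module, so that $\Fun_{\CC|\CD}(\CM,\CM)=\Fun_\CA(\CM,\CM)$ is a UMFC. First I would note that $\CA$ is again an IUMFC --- the graph recording the nonzero hom-components of $\CC\boxtimes\CD^\rev$ is the tensor product of those of $\CC$ and $\CD$, hence connected since both carry a loop at every vertex --- so that $Z(\CA)=Z(\CC)\boxtimes\overline{Z(\CD)}$ is a UMTC and $\overline{Z(\CA)}=\overline{Z(\CC)}\boxtimes Z(\CD)$. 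Since $\CA$ is indecomposable and $\CM\ne 0$, the module $\CM$ is faithful, hence an invertible $\CA$-$\Fun_\CA(\CM,\CM)$-bimodule; Morita invariance of the Drinfeld center then supplies the required structure map, a braided equivalence $\overline{Z(\CC)}\boxtimes Z(\CD)\simeq Z(\Fun_{\CC|\CD}(\CM,\CM))$, realized by letting the center of $\CA$ act through the copy of $\CA$ in the middle of $\CM^\op\boxtimes_\CA\CM\simeq\Fun_\CA(\CM,\CM)$. This exhibits $\Fun_{\CC|\CD}(\CM,\CM)$ as a \emph{closed} multi-fusion $Z(\CC)$-$Z(\CD)$-bimodule; since equivalent bimodules plainly have equivalent images, $Z$ is well defined on morphisms.

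Next I would check functoriality. For the unit, $\Fun_{\CC|\CC}(\CC,\CC)\simeq Z(\CC)$ with its tautological half-braiding, and I would verify this is the identity endomorphism of $Z(\CC)$ in $\mathbf{UMTC}$. For composition, given ${}_\CC\CM_\CD$ and ${}_\CD\CN_\CE$, I would produce an equivalence of closed multi-fusion $Z(\CC)$-$Z(\CE)$-bimodules
\[
\Fun_{\CC|\CD}(\CM,\CM)\boxtimes_{Z(\CD)}\Fun_{\CD|\CE}(\CN,\CN)\ \simeq\ \Fun_{\CC|\CE}\bigl(\CM\boxtimes_\CD\CN,\ \CM\boxtimes_\CD\CN\bigr),
\]
the left-hand side being closed by Theorem \ref{thm:closed}; here one rewrites both sides through internal homs and the identifications of relative tensor products with functor categories from Section 2, and checks that the comparison is monoidal and intertwines the two structure maps. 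I expect this composition identity to be the main obstacle, being the only step that is neither formal nor a bare citation.

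For full faithfulness I would argue in two halves. Faithfulness: suppose $\Fun_{\CC|\CD}(\CM,\CM)\simeq\Fun_{\CC|\CD}(\CM',\CM')$ as closed multi-fusion $Z(\CC)$-$Z(\CD)$-bimodules, with $\CM,\CM'$ nonzero, hence faithful $\CA$-modules; then $\CM^\op\boxtimes_\CA\CM'\simeq\Fun_\CA(\CM,\CM')$ is an invertible bimodule over $\CB:=\Fun_\CA(\CM,\CM)$, and compatibility of the given equivalence with the structure maps forces the braided autoequivalence of $Z(\CB)$ it induces to be the identity; by the Etingof--Nikshych--Ostrik isomorphism $\mathrm{BrPic}(\CB)\cong\Aut^{br}(Z(\CB))$ this bimodule is trivial, whence $\CM\simeq\CM'$ as $\CA$-modules, i.e.\ as $\CC$-$\CD$-bimodules. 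Fullness: given a closed multi-fusion $Z(\CC)$-$Z(\CD)$-bimodule $\CP$, one has $Z(\CP)\simeq\overline{Z(\CC)}\boxtimes Z(\CD)$, which --- being a UMTC --- forces $\CP$ to be an IUMFC, and which equals $Z(\CA^\rev)$; hence $\CP$ and $\CA^\rev$ are Morita equivalent. Choosing an invertible $\CA^\rev$-$\CP$-bimodule $\CN$ realizing the given braided equivalence of centers and setting $\CM:=\CN^\op$, regarded as a left $\CA$-module by forgetting the $\CP$-action, I obtain a nonzero $\CC$-$\CD$-bimodule with $\Fun_{\CC|\CD}(\CM,\CM)\simeq\CP$ as closed multi-fusion bimodules; several $(-)^\rev$'s must be tracked here to match the conventions of \cite{KZ1}.

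It remains to see that unitarity changes nothing. Every categorical input above is available unitarily, as recorded in Section 2: $Z(-)$ of an IUMFC is a UMTC; module functor categories, internal homs and relative tensor products of unitary module categories are unitary; and the equivalences of Section 2 together with Theorem \ref{thm:closed} are stated in unitary form. The two group-theoretic inputs --- the Brauer--Picard theorem and the equivalence of Morita equivalence with braided equivalence of Drinfeld centers --- hold verbatim for unitary fusion categories, since the relevant separable algebras may be taken to respect the $*$-structure and adjoints of $*$-functors are $*$-functors. Finally, every equivalence produced along the way is automatically a $*$-functor and every structural isomorphism automatically unitary, because unitary categories are semisimple: a $\C$-linear equivalence between them refines to a unitary one, and a natural isomorphism between $*$-functors can be taken unitary. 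Granting all this, the proof of \cite[Theorem 3.3.7]{KZ1} transfers without change.
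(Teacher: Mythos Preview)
The paper does not supply a proof of this theorem at all: it is stated as a citation of \cite[Theorem 3.3.7]{KZ1}, prefaced only by the sentence ``We recall the main result in \cite{KZ1} that can be generalized to the unitary case automatically,'' and followed by the remark that the full faithfulness ``has essentially been given in \cite{eno2008,eno2009,dmno}.'' So there is no in-paper argument to compare your proposal against.

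Your proposal is a reasonable unpacking of what that citation contains: you reproduce the architecture of the \cite{KZ1} proof (closedness of $\Fun_{\CC|\CD}(\CM,\CM)$ via Morita invariance of the center, functoriality via the composition identity that the present paper records as \eqref{eq:MMNN}, and full faithfulness via the Etingof--Nikshych--Ostrik Brauer--Picard isomorphism together with the center-determines-Morita-class theorem), and you correctly identify the unitary-transfer step as essentially formal given the inputs collected in Section~2. If your goal is simply to justify the theorem as used in this paper, a bare citation of \cite{KZ1} together with the one-line observation on unitarity is all the paper itself offers; your sketch is more than that and is broadly accurate, though in a full write-up you would want to be more careful in the fullness argument about matching the \emph{given} braided equivalence $\overline{Z(\CC)}\boxtimes Z(\CD)\simeq Z(\CP)$ rather than merely some braided equivalence, since the Brauer--Picard group can be nontrivial.
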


\begin{rem}
The fully-faithfulness of $Z$ has essentially been given in \cite{eno2008,eno2009,dmno}.
\end{rem}

More explicitly, Theorem \ref{thm:kz} implies the following result. Let $\CC,\CD,\CE$ be IUMFC's. Let $\CM$ be a nonzero $\CC$-$\CD$-bimodule and $\CN$ a nonzero $\CD$-$\CE$-bimodule. The assignment $f\boxtimes_{Z(\CD)}g \mapsto f\boxtimes_\CD g$ defines an equivalence between two closed multi-fusion $Z(\CC)$-$Z(\CE)$-bimodules:
\begin{equation}\label{eq:MMNN}
  \Fun_{\CC|\CD}(\CM, \CM) \boxtimes_{Z(\CD)} \Fun_{\CD|\CE}(\CN,\CN)
  \simeq \Fun_{\CC|\CE}(\CM\boxtimes_\CD \CN, \CM\boxtimes_\CD \CN).
\end{equation}

\begin{cor} \label{cor:uib}
Let $\CC,\CD$ be IUMFC's. Given a braided monoidal equivalence $Z(\CC)\simeq Z(\CD)$, there is a unique invertible $\CC$-$\CD$-bimodule $\CM$ up to equivalence such that $\Fun_{\CC|\CD}(\CM,\CM) \simeq Z(\CC)$ as multi-fusion $Z(\CC)$-$Z(\CD)$-bimodules. Moreover, $\CM$ is the unique $\CC$-$\CD$-bimodule up to equivalence such that the canonical monoidal functor $\CC\boxtimes_{Z(\CC)}\CD^\rev \to \Fun(\CM,\CM)$, $c\boxtimes_{Z(\CC)}d \mapsto c\odot-\odot d$ is an equivalence
\end{cor}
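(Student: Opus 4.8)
The plan is to deduce everything from the full faithfulness of $Z$ (Theorem \ref{thm:kz}), supplemented by a structural description of $\Fun(\CM,\CM)$.

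First I would dispose of the existence and uniqueness of the invertible bimodule. Fix a braided monoidal equivalence $\psi\colon Z(\CC)\xrightarrow{\sim} Z(\CD)$. It represents an isomorphism in $\mathbf{UMTC}$, namely the class of the closed multi-fusion $Z(\CC)$-$Z(\CD)$-bimodule given by $Z(\CC)$ together with the braided functor $\overline{Z(\CC)}\boxtimes Z(\CD)\to Z(Z(\CC))\simeq Z(\CC)\boxtimes\overline{Z(\CC)}$ built from $\psi$. Since $Z\colon\mathbf{IUMFC}\to\mathbf{UMTC}$ is fully faithful there is, up to equivalence, a unique nonzero $\CC$-$\CD$-bimodule $\CM$ whose image under $Z$ is this isomorphism; by the formula for $Z$ on morphisms this says precisely $\Fun_{\CC|\CD}(\CM,\CM)\simeq Z(\CC)$ as multi-fusion $Z(\CC)$-$Z(\CD)$-bimodules, the structure on $Z(\CC)$ being the one induced by $\psi$. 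A fully faithful functor reflects isomorphisms, so $\CM$ is invertible; conversely any bimodule with the displayed property is sent by $Z$ to an isomorphism and is therefore invertible. This settles the first claim.

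For the second claim I would start from this special $\CM$. The left and right actions give commuting monoidal functors $\CC\to\Fun(\CM,\CM)$ and $\CD^\rev\to\Fun(\CM,\CM)$, hence a monoidal functor $\CC\boxtimes\CD^\rev\to\Fun(\CM,\CM)$. To descend it to $\CC\boxtimes_{Z(\CC)}\CD^\rev$ one needs, for each $z\in Z(\CC)$, a natural isomorphism $U(z)\odot(-)\simeq(-)\odot U(\psi(z))$ (where $U$ denotes the relevant forgetful functors), coherent with the balancing of $\boxtimes_{Z(\CC)}$. I would obtain it as follows: the endofunctor $U(z)\odot(-)$ of $\CM$ is canonically a $\CC$-$\CD$-bimodule functor — it commutes with the left action through the half-braiding of $z$ and with the right action through the associativity of $\CM$ — so it determines an object of $\Fun_{\CC|\CD}(\CM,\CM)\simeq Z(\CC)$; tracing this object through the forgetful functor to $\Fun_\CC(\CM,\CM)$ and then through the equivalence $\Fun_\CC(\CM,\CM)\simeq\CD^\rev$ (valid because $\CM$ is invertible, by Theorem \ref{thm:dual-mod-cat} and \eqref{eq:xRy}) identifies it with $(-)\odot U(\psi(z))$, the identification with $Z(\CC)$ being the one fixed by $\psi$. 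Coherence is a routine verification. This produces the canonical monoidal functor $F\colon\CC\boxtimes_{Z(\CC)}\CD^\rev\to\Fun(\CM,\CM)$.

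The heart of the matter is that $F$ is an equivalence. For this I would use a structural description of $\Fun(\CM,\CM)$: iterating the composition formula \eqref{eq:MMNN} (equivalently, combining \eqref{eq:xRy} with the Morita-invariance of the Drinfeld center and the fact that $\CM$ is simultaneously an invertible $\CC$-$\CD$-bimodule and an invertible $(\CC\boxtimes\CD^\rev)$-$Z(\CC)$-bimodule) expresses $\Fun(\CM,\CM)$ as a relative tensor product built from $\CC$, $\CD^\rev$, $Z(\CC)$ and $\Fun_{\CC|\CD}(\CM,\CM)$, which for our $\CM$ collapses — using that $\Fun_{\CC|\CD}(\CM,\CM)\simeq Z(\CC)$ is the unit for $\boxtimes_{Z(\CC)}$, and transporting the surviving factor along $\psi$ — exactly to $\CC\boxtimes_{Z(\CC)}\CD^\rev$, compatibly with $F$. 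For the uniqueness clause I would run this backwards: if $\CM'$ is any $\CC$-$\CD$-bimodule for which the canonical functor $\CC\boxtimes_{Z(\CC)}\CD^\rev\to\Fun(\CM',\CM')$ exists and is an equivalence, then comparing it with the structural description of $\Fun(\CM',\CM')$ and cancelling the invertible outer factors $\CC$ and $\CD^\rev$ forces $\Fun_{\CC|\CD}(\CM',\CM')\simeq Z(\CC)$ as multi-fusion $Z(\CC)$-$Z(\CD)$-bimodules compatibly with $\psi$; by the uniqueness in the first claim, $\CM'\simeq\CM$. I expect the structural description of $\Fun(\CM,\CM)$ — and the precise bookkeeping of the central and braiding data through it — to be the only genuinely technical step; the rest is formal, organized around the full faithfulness of $Z$.
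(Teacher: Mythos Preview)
Your approach is essentially the paper's: the first claim by full faithfulness of $Z$, and the equivalence $\CC\boxtimes_{Z(\CC)}\CD^\rev\simeq\Fun(\CM,\CM)$ by instantiating \eqref{eq:MMNN}. The paper does the latter in one clean line by writing $\CC\simeq\Fun_{\bk|\CC}(\CC,\CC)$, $Z(\CC)\simeq\Fun_{\CC|\CD}(\CM,\CM)$, $\CD^\rev\simeq\Fun_{\CD|\bk}(\CD,\CD)$ and composing via \eqref{eq:MMNN}; your ``structural description'' is exactly this, so the technical step you flag is lighter than you anticipate, and your separate paragraph on constructing the descended functor is not needed.

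Where you diverge is the uniqueness argument. You propose to run the structural decomposition for an arbitrary $\CM'$ and \emph{cancel} the outer factors $\CC$ and $\CD^\rev$ to force $\Fun_{\CC|\CD}(\CM',\CM')\simeq Z(\CC)$, then invoke the first claim. This can be made to work, but the cancellation step needs its own justification. The paper avoids this entirely: from $\Fun(\CM,\CM)\simeq\CC\boxtimes_{Z(\CC)}\CD^\rev\simeq\Fun(\CN,\CN)$ one gets a monoidal equivalence $\Fun(\CM,\CM)\simeq\Fun(\CN,\CN)$, which in the unitary semisimple setting forces an equivalence $\CM\simeq\CN$ of underlying categories; since the $\CC$- and $\CD$-actions on both sides factor through this monoidal equivalence, the equivalence is automatically one of bimodules. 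This Morita-style argument is shorter and sidesteps the need to invert the relative tensor product.
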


\begin{proof}
The first claim is an immediate consequence of the fully-faithfulness of $Z$. Moreover, applying \eqref{eq:MMNN} we have
\begin{align*}
\nonumber
\CC\boxtimes_{Z(\CC)}\CD^\rev
& \simeq \Fun_{\bk|\CC}(\CC,\CC)\boxtimes_{Z(\CC)}\Fun_{\CC|\CD}(\CM,\CM)\boxtimes_{Z(\CD)}\Fun_{\CD|\bk}(\CD,\CD) \\
& \simeq \Fun_{\bk|\bk}(\CC\boxtimes_\CC\CM\boxtimes_\CD\CD,\CC\boxtimes_\CC\CM\boxtimes_\CD\CD) \\
&\simeq \Fun(\CM,\CM).
\end{align*}
If $\CN$ is another $\CC$-$\CD$-bimodule such that $\CC\boxtimes_{Z(\CC)}\CD^\rev \simeq \Fun(\CN,\CN)$, then the monoidal equivalence $\Fun(\CM,\CM)\simeq\Fun(\CN,\CN)$ induces an equivalence $\CM\simeq\CN$ which is automatically a bimodule equivalence.
\end{proof}

\begin{cor} \label{cor:inv-bimod}
Let $\CC,\CD$ be UMTC's and let $\CM$ be a closed multi-fusion $\CC$-$\CD$-bimodule. We have the following assertions:
\begin{enumerate}
\item The canonical monoidal functor $\CM\boxtimes_{\bar\CC\boxtimes\CD}\CM^\rev \to \Fun(\CM,\CM)$, $x\boxtimes_{\bar\CC\boxtimes\CD}y \mapsto x\odot-\odot y$ is an equivalence.
\item The $\CM\boxtimes_\CD\CM^\rev$-$\CC$ bimodule $\CM$ is invertible and $\Fun_{\CM\boxtimes_\CD\CM^\rev|\CC}(\CM,\CM) \simeq \bar\CC\boxtimes\CC$ as multi-fusion $Z(\CM\boxtimes_\CD\CM^\rev)$-$Z(\CC)$-bimodules.
\item The $\CD$-$\CM^\rev\boxtimes_{\CC}\CM$ bimodule $\CM$ is invertible and $\Fun_{\CD|\CM^\rev\boxtimes_{\CC}\CM}(\CM,\CM) \simeq \bar\CD\boxtimes\CD$ as multi-fusion $Z(\CD)$-$Z(\CM^\rev\boxtimes_{\CC}\CM)$-bimodules.
\end{enumerate}
\end{cor}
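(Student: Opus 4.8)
The plan is to reduce everything to part (1) and to the characterization of the canonical invertible bimodule in Corollary \ref{cor:uib}, using Theorem \ref{thm:closed} to control the relevant Drinfeld centers. I would first record the basic observation that $\CM$ is itself an IUMFC: since $\phi_\CM$ is an equivalence, $Z(\CM)\simeq\bar\CC\boxtimes\CD$ is a UMTC, hence nonzero with a simple tensor unit, and this forces $\CM$ to be nonzero and indecomposable. This is what lets me feed $\CM$ (and bimodules assembled from it) into \eqref{eq:MMNN} and Corollary \ref{cor:uib}.

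For (1), I would apply \eqref{eq:MMNN} with $\bk$ in the role of the two outer categories, with $\CM$ in the role of the middle category, and with the two bimodules taken to be the regular right $\CM$-module $\CM$ and the regular left $\CM$-module $\CM$. The three categories appearing in \eqref{eq:MMNN} are identified canonically: $\Fun_{\bk|\CM}(\CM,\CM)\simeq\CM$ via $m\mapsto m\otimes-$, $\Fun_{\CM|\bk}(\CM,\CM)\simeq\CM^\rev$ via $m\mapsto-\otimes m$, and $\CM\boxtimes_\CM\CM\simeq\CM$; under these identifications the $Z(\CM)$-module structures occurring in \eqref{eq:MMNN} become the canonical central actions of $Z(\CM)$ on $\CM$ and on $\CM^\rev$, so \eqref{eq:MMNN} specializes to a canonical equivalence $\CM\boxtimes_{Z(\CM)}\CM^\rev\simeq\Fun(\CM,\CM)$. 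Transporting the module structures along $\phi_\CM\colon\bar\CC\boxtimes\CD\xrightarrow{\sim}Z(\CM)$ yields the equivalence of (1); chasing the rule $f\boxtimes g\mapsto f\boxtimes g$ of \eqref{eq:MMNN} through the identifications exhibits it as the stated functor $x\boxtimes y\mapsto x\odot-\odot y$, and checking monoidality and compatibility with the remaining structure is routine.

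For (2), I would first note that $\CM^\rev$ is a closed multi-fusion $\CD$-$\CC$-bimodule, since $Z(\CM^\rev)=\overline{Z(\CM)}\simeq\overline{\bar\CC\boxtimes\CD}\simeq\bar\CD\boxtimes\CC$; hence by Theorem \ref{thm:closed} the UMFC $\CL:=\CM\boxtimes_\CD\CM^\rev$ is a closed multi-fusion $\CC$-$\CC$-bimodule, so $Z(\CL)\simeq\bar\CC\boxtimes\CC$ and $\CL$ is an IUMFC, and a comparison of braidings with $Z(\CC)\simeq\CC\boxtimes\bar\CC$ produces a braided equivalence $Z(\CL)\simeq Z(\CC)$. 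Corollary \ref{cor:uib} then supplies a unique-up-to-equivalence $\CL$-$\CC$-bimodule $\CM'$ for which the canonical functor $\CL\boxtimes_{Z(\CL)}\CC^\rev\to\Fun(\CM',\CM')$ is an equivalence, and that $\CM'$ is automatically invertible with $\Fun_{\CL|\CC}(\CM',\CM')\simeq Z(\CL)\simeq\bar\CC\boxtimes\CC$. So (2) reduces to checking that the canonical functor $\CL\boxtimes_{Z(\CL)}\CC^\rev\to\Fun(\CM,\CM)$, $\ell\boxtimes c\mapsto\ell\odot-\odot c$, is an equivalence. For this I would identify the two tensor factors of $Z(\CL)\simeq\bar\CC\boxtimes\CC$ with the central images of, respectively, the left $\CC$-action of $\CL$ (inherited from $\CM$) and the right $\CC$-action of $\CL$ (inherited from $\CM^\rev$); balancing $\CC^\rev$, the regular $\CC$-bimodule, against these two central actions of $\CL$ has the effect of identifying the left and right $\CC$-actions of $\CL$ while leaving the $\CD$-balancing untouched, whence $\CL\boxtimes_{Z(\CL)}\CC^\rev\simeq\CM\boxtimes_{\bar\CC\boxtimes\CD}\CM^\rev$, which by (1) is $\Fun(\CM,\CM)$; a short diagram chase identifies the composite with the functor $\ell\boxtimes c\mapsto\ell\odot-\odot c$, so $\CM\simeq\CM'$ and (2) follows. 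Finally (3) is obtained from (2) applied to the closed multi-fusion $\CD$-$\CC$-bimodule $\CM^\rev$, after the evident rewriting using $(\CM^\rev)^\rev\simeq\CM$ and the equivalence $\CN\simeq\CN^\op$, $x\mapsto x^*$, to pass between the left/right conventions; equivalently, one reruns the argument of (2) folding the $\CC$-action in place of the $\CD$-action.

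I expect the main obstacle to be the bookkeeping concentrated in part (2): pinning down which tensor factor of each Drinfeld center corresponds to which one-sided module action, keeping track of the anti-braidings introduced by $(-)^\rev$ and $(-)^\op$, and above all verifying that the abstract equivalences handed over by \eqref{eq:MMNN} and Corollary \ref{cor:uib} really are the canonical comparison functors named in the statement rather than merely some equivalences. The existence of the equivalences is a formal consequence of the cited results; the content lies in making all the identifications canonical and mutually compatible.
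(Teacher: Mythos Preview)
Your proposal is correct and follows essentially the same route as the paper: deduce (1) from the identification $\CM\boxtimes_{Z(\CM)}\CM^\rev\simeq\Fun(\CM,\CM)$ (the paper invokes Corollary~\ref{cor:uib} with $\CC=\CD=\CM$, which is exactly your application of \eqref{eq:MMNN}), then for (2) identify $(\CM\boxtimes_\CD\CM^\rev)\boxtimes_{\bar\CC\boxtimes\CC}\CC^\rev$ with $\CM\boxtimes_{\bar\CC\boxtimes\CD}\CM^\rev$ and apply (1) together with Corollary~\ref{cor:uib}, and handle (3) symmetrically. The paper carries out your ``balancing'' step as the explicit chain $(\CM\boxtimes_\CD\CM^\rev)\boxtimes_{\bar\CC\boxtimes\CC}\CC^\rev \simeq \CM\boxtimes_{\bar\CC\boxtimes\CD}(\CM^\rev\boxtimes_\CC\CC^\rev) \simeq \CM\boxtimes_{\bar\CC\boxtimes\CD}\CM^\rev$, which is precisely the bookkeeping you anticipated.
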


\begin{proof}
$(1)$ is an easy consequence of Corollary \ref{cor:uib}. Note that $\CM\boxtimes_\CD\CM^\rev$ is a closed multi-fusion $\CC$-$\CC$-bimodule by Theorem \ref{thm:closed}. Moreover,
$(\CM\boxtimes_\CD\CM^\rev)\boxtimes_{\bar\CC\boxtimes\CC}\CC^\rev
\simeq \CM\boxtimes_{\bar\CC\boxtimes\CD}(\CM^\rev\boxtimes_\CC\CC^\rev)
\simeq \CM\boxtimes_{\bar\CC\boxtimes\CD}(\CC\boxtimes_\CC\CM)^\rev
\simeq \CM\boxtimes_{\bar\CC\boxtimes\CD}\CM^\rev
\simeq \Fun(\CM,\CM)$.
Applying Corollary \ref{cor:uib} again, we obtain $(2)$. $(3)$ is proved similarly.
\end{proof}

\begin{cor} \label{cor:inv-comp}
Let $\CC,\CD,\CE$ be UMTC's and let $\CM,\CN$ be closed multi-fusion $\CC$-$\CD$-bimodules, $\CM',\CN'$ be closed multi-fusion $\CD$-$\CE$-bimodules. Suppose $\CF$ is an invertible $\CM$-$\CN$-bimodule such that $\Fun_{\CM|\CN}(\CF,\CF)\simeq\bar\CC\boxtimes\CD$ as multi-fusion $Z(\CM)$-$Z(\CN)$-bimodules, and $\CG$ is an invertible $\CM'$-$\CN'$-bimodule such that $\Fun_{\CM'|\CN'}(\CG,\CG)\simeq\bar\CD\boxtimes\CE$ as multi-fusion $Z(\CM')$-$Z(\CN')$-bimodules. We have the following assertions:
\begin{enumerate}
\item The canonical monoidal functor $\CM\boxtimes_{\bar\CC}\CN^\rev \to \Fun_{\CD^\rev}(\CF,\CF)$, $x\boxtimes_{\bar\CC}y \mapsto x\odot-\odot y$ is an equivalence.
\item The canonical monoidal functor $\CM'\boxtimes_{\CE}\CN'^\rev \to \Fun_{\CD}(\CG,\CG)$, $x\boxtimes_{\CE}y \mapsto x\odot-\odot y$ is an equivalence.
\item The $\CM\boxtimes_\CD\CM'$-$\CN\boxtimes_\CD\CN'$-bimodule $\CF\boxtimes_\CD\CG$ is invertible and $\Fun_{\CM\boxtimes_\CD\CM'|\CN\boxtimes_\CD\CN'}(\CF\boxtimes_\CD\CG,\CF\boxtimes_\CD\CG) \simeq \bar\CC\boxtimes\CE$ as multi-fusion $Z(\CM\boxtimes_\CD\CM')$-$Z(\CN\boxtimes_\CD\CN')$-bimodules.
\end{enumerate}
\end{cor}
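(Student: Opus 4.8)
The plan is to deduce all three assertions from the corollaries already established, treating Corollary~\ref{cor:inv-bimod} as the engine. For $(1)$, the key observation is that $\CF$, being an invertible $\CM$-$\CN$-bimodule with $\Fun_{\CM|\CN}(\CF,\CF)\simeq\bar\CC\boxtimes\CD$, is precisely the kind of bimodule produced by Corollary~\ref{cor:inv-bimod}(2): indeed $\bar\CC\boxtimes\CC$ there plays the role that $\bar\CC\boxtimes\CD$ plays here once one remembers $Z(\CM)\simeq\bar\CC\boxtimes\CC$ and $Z(\CN)\simeq\bar\CC\boxtimes\CD$ (using that $\CM,\CN$ are closed $\CC$-$\CD$-bimodules, so $Z(\CM)\simeq Z(\CN)\simeq\bar\CC\boxtimes\CD$ — wait, more carefully, closedness gives $\phi_\CM:\bar\CC\boxtimes\CD\xrightarrow{\sim}Z(\CM)$). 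So I would first record that $\Fun_{\CD^\rev}(\CF,\CF)$ is the left/right dual of $\CF$ as an $\CM$-$\CN$-bimodule in the sense of the paragraph before Theorem~\ref{thm:dual-mod-cat}, hence by Corollary~\ref{cor:uib} (applied to the pair $\CM$, $\CN$ of IUMFC's with $Z(\CM)\simeq Z(\CN)$ via $\phi_\CN\circ\phi_\CM^{-1}$) the bimodule $\CF$ is the unique one whose associated functor $\CM\boxtimes_{Z(\CM)}\CN^\rev\to\Fun(\CF,\CF)$ is an equivalence. Restricting attention to the $\CD$-balanced part — i.e. quotienting $\boxtimes_{Z(\CM)}=\boxtimes_{\bar\CC\boxtimes\CC}$ down to $\boxtimes_{\bar\CC}$ and $\Fun$ down to $\Fun_{\CD^\rev}$ — gives $(1)$. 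Assertion $(2)$ is the mirror-image statement for the right side and follows the same way with $\CC$ and $\CE$ interchanged and $\boxtimes_{\CE}$, $\Fun_{\CD}$ in place of $\boxtimes_{\bar\CC}$, $\Fun_{\CD^\rev}$.

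For $(3)$, I would argue by a balanced-tensor-product computation analogous to the proof of Corollary~\ref{cor:inv-bimod}(2). First, $\CM\boxtimes_\CD\CM'$ and $\CN\boxtimes_\CD\CN'$ are closed multi-fusion $\CC$-$\CE$-bimodules by Theorem~\ref{thm:closed}, so their centers are $\bar\CC\boxtimes\CE$. Next I would identify $\CF\boxtimes_\CD\CG$ as an $\CM\boxtimes_\CD\CM'$-$\CN\boxtimes_\CD\CN'$-bimodule and compute
\begin{align*}
\Fun_{\CM\boxtimes_\CD\CM'|\CN\boxtimes_\CD\CN'}(\CF\boxtimes_\CD\CG,\CF\boxtimes_\CD\CG)
&\simeq \Fun_{\CM|\CN}(\CF,\CF)\boxtimes_{Z(\CD)}\Fun_{\CM'|\CN'}(\CG,\CG)\\
&\simeq (\bar\CC\boxtimes\CD)\boxtimes_{Z(\CD)}(\bar\CD\boxtimes\CE)\\
&\simeq \bar\CC\boxtimes\CE,
\end{align*}
where the first line is the instance of \eqref{eq:MMNN} (or rather its multi-fusion-bimodule analogue) for the composable triple $\CM,\CM'$ over $\CD$ versus $\CN,\CN'$ over $\CD$, the second uses the two hypotheses on $\CF$ and $\CG$, and the last uses $Z(\CD)\simeq\bar\CD\boxtimes\CD$ together with the cancellation $\CD\boxtimes_{\bar\CD\boxtimes\CD}\bar\CD\simeq\bk$ (the balancing over $Z(\CD)$ kills the $\CD$ on the right of the first factor against the $\bar\CD$ on the left of the second). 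Invertibility of $\CF\boxtimes_\CD\CG$ then follows from Theorem~\ref{thm:kz}: a closed multi-fusion bimodule whose image under $Z$ is $\bar\CC\boxtimes\CE\simeq Z(\,\cdot\,)$ of both endpoints must be invertible, exactly as in Corollary~\ref{cor:inv-bimod}(2)--(3).

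The main obstacle I anticipate is getting the bimodule structures and the balancings to line up precisely in the chain above — in particular, justifying that \eqref{eq:MMNN} applies in the \emph{multi-fusion-bimodule} setting (where the relevant $Z$ is the Drinfeld center of $\CD$ regarded through $\phi$, not merely of $\CD$ as an IUMFC) and checking that the equivalences $\Fun_{\CM|\CN}(\CF,\CF)\simeq\bar\CC\boxtimes\CD$, $\Fun_{\CM'|\CN'}(\CG,\CG)\simeq\bar\CD\boxtimes\CE$ are equivalences of multi-fusion bimodules over the correct centers, so that the balanced tensor product $\boxtimes_{Z(\CD)}$ on the $\Fun$ side matches $\boxtimes_\CD$ on the underlying module categories. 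Once the bookkeeping of which copy of $\CD$ (or $\bar\CD$) is being balanced against which is pinned down, $(3)$ is a formal consequence of Theorems~\ref{thm:closed} and~\ref{thm:kz}; assertions $(1)$ and $(2)$ are then bookkeeping-free corollaries of $(3)$ specialized to the endpoints together with Corollary~\ref{cor:uib}.
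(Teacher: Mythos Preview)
Your proposal has the right ingredients but the logical flow is inverted relative to what actually works, and this inversion creates two genuine gaps.

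For (1) and (2), the ``restriction to the $\CD$-balanced part'' is not a well-defined operation: passing from $\boxtimes_{\bar\CC\boxtimes\CD}$ to $\boxtimes_{\bar\CC}$ enlarges the source, while passing from $\Fun$ to $\Fun_{\CD^\rev}$ shrinks the target, and there is no formal reason these two moves should balance out to an equivalence. The paper instead argues, by the same method as in the proof of Corollary~\ref{cor:inv-bimod}, that $\CF$ is invertible as a $\CD^\rev$-$(\CM\boxtimes_{\bar\CC}\CN^\rev)^\rev$-bimodule (and $\CG$ as a $\CD$-$(\CM'\boxtimes_{\CE}\CN'^\rev)^\rev$-bimodule). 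Theorem~\ref{thm:dual-mod-cat} then identifies the duality map $u$ with the action map $\CM\boxtimes_{\bar\CC}\CN^\rev\to\Fun_{\CD^\rev}(\CF,\CF)$, which is an equivalence precisely because $\CF$ is invertible.

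For (3), your first displayed equivalence is \emph{not} an instance of \eqref{eq:MMNN}: that formula concerns composable bimodules over a common middle IUMFC, whereas $\CF$ is an $\CM$-$\CN$-bimodule and $\CG$ an $\CM'$-$\CN'$-bimodule, with the $\CD$-balancing coming from the \emph{central} actions rather than from the bimodule structures. You correctly flag this as the main obstacle, and it is a real one --- no such ``multi-fusion-bimodule analogue'' of \eqref{eq:MMNN} is established in the paper. The paper circumvents it by reversing your order: once (1) and (2) are proved, they give $\CM\boxtimes_{\bar\CC}\CN^\rev\simeq\Fun_{\bk|\CD}(\CF,\CF)$ and $\CM'\boxtimes_\CE\CN'^\rev\simeq\Fun_{\CD|\bk}(\CG,\CG)$, and then
\begin{align*}
(\CM\boxtimes_\CD\CM')\boxtimes_{\bar\CC\boxtimes\CE}(\CN\boxtimes_\CD\CN')^\rev
&\simeq (\CM\boxtimes_{\bar\CC}\CN^\rev)\boxtimes_{\CD\boxtimes\bar\CD}(\CM'\boxtimes_\CE\CN'^\rev)\\
&\simeq \Fun_{\bk|\CD}(\CF,\CF)\boxtimes_{Z(\CD)}\Fun_{\CD|\bk}(\CG,\CG)\\
&\simeq \Fun(\CF\boxtimes_\CD\CG,\CF\boxtimes_\CD\CG)
\end{align*}
uses \eqref{eq:MMNN} in its literal form (with the outer IUMFC's equal to $\bk$). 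Corollary~\ref{cor:uib}, via its second characterization, then yields both invertibility of $\CF\boxtimes_\CD\CG$ and the desired identification of $\Fun_{\CM\boxtimes_\CD\CM'|\CN\boxtimes_\CD\CN'}(\CF\boxtimes_\CD\CG,\CF\boxtimes_\CD\CG)$ in one stroke. So (1) and (2) are inputs to (3), not consequences of it.
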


\begin{proof}
Using a similar augment as the proof of Corollary \ref{cor:inv-bimod}, we deduce that the $\CD^\rev$-$(\CM\boxtimes_{\bar\CC}\CN^\rev)^\rev$-bimodule $\CF$ and the $\CD$-$(\CM'\boxtimes_{\CE}\CN'^\rev)^\rev$-bimodule $\CG$ are invertible. Applying Theorem \ref{thm:dual-mod-cat}, we obtain $(1)$ and $(2)$.
Moreover,
\begin{equation*}
\begin{split}
& (\CM\boxtimes_\CD\CM') \boxtimes_{\bar\CC\boxtimes\CE} (\CN\boxtimes_\CD\CN')^\rev \\
\simeq ~& (\CM\boxtimes_{\bar\CC}\CN^\rev) \boxtimes_{\CD\boxtimes\bar\CD} (\CM'\boxtimes_{\CE}\CN'^\rev)\\
\simeq ~& \Fun_{\bk|\CD}(\CF,\CF) \boxtimes_{Z(\CD)} \Fun_{\CD|\bk}(\CG,\CG) \\
\simeq ~& \Fun(\CF\boxtimes_\CD\CG,\CF\boxtimes_\CD\CG)
\end{split}
\end{equation*}
where the last $\simeq$ is due to \eqref{eq:MMNN}.
Applying Corollary \ref{cor:uib}, we obtain $(3)$.
\end{proof}

\section{Enriched IUMFC's and bimodules}

In this section, we introduced the notions of enriched IUMFC's, bimodules and bimodule functors, etc. For reader's convenience, we draw several figures to illustrate these concepts.

\begin{defn}
An {\em enriched IUMFC} is a pair $\FC=(\CC_1,\CC_2)$ where $\CC_2$ is a UMTC and $\CC_1$ is an IUMFC equipped with a fully faithful braided monoidal functor $\CC_2\hookrightarrow Z(\CC_1)$. See Figure \ref{fig:iumfc}(a).
We use $\FC^\rev$ to denote the enriched IUMFC $(\CC_1^\rev,\bar\CC_2)$. See Figure \ref{fig:iumfc}(b).
By abusing notation, we use $\bk$ to denote the enriched IUMFC $(\bk,\bk)$.
\end{defn}

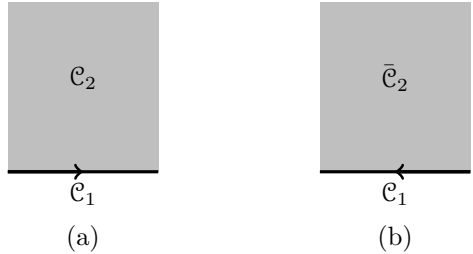
\begin{figure}[h]
\begin{tabular}{c@{\quad\quad\quad\quad\quad\quad}c}
\begin{tikzpicture}[scale=.25,line width=1.2pt]
\fill[lightgray] (0,0) rectangle (8,9);
\draw (4,5) node{$\CC_2$};
\draw (0,0) -- (8,0);
\draw (0,0) [->] -- (4,0) node[below]{$\CC_1$};
\end{tikzpicture}
&
\begin{tikzpicture}[scale=.25,line width=1.2pt]
\fill[lightgray] (0,0) rectangle (8,9);
\draw (4,5) node{$\bar\CC_2$};
\draw (0,0) -- (8,0);
\draw (8,0) [->] -- (4,0) node[below]{$\CC_1$};
\end{tikzpicture}
\\ (a) & (b)
\end{tabular}
\caption{(a) depicts an enriched IUMFC $\FC=(\CC_1,\CC_2)$ where $\CC_1$ admits a central action by $\CC_2$. (b) depicts its reverse $\FC^\rev=(\CC_1^\rev,\bar\CC_2)$. Keep in mind that the categorical constructions $\CC\mapsto\bar\CC$, $\CC\mapsto\CC^\rev$, $\CC\mapsto\CC^\op$ correspond to changing the orientations of 2,1,0-cells in figures.}
\label{fig:iumfc}
\end{figure}

\begin{rem} \label{rem:enrich}
It was shown in \cite{KZ2} that an enriched IUMFC $\FC=(\CC_1,\CC_2)$ is naturally associated with a monoidal category $\FC^\sharp$ enriched in $\bar\CC_2$ in the sense of \cite{MP} such that $\FC^\sharp$ takes $\CC_1$ as underlying category and $\Hom_{\FC^\sharp}(x,y) = [x,y]_{\bar\CC_2}$. Moreover, the Drinfeld center $Z(\FC^\sharp)$ was shown to be equivalent to the centralizer of $\CC_2$ in $Z(\CC_1)$. In particular, $Z(\CC_1)\simeq\CC_2\boxtimes Z(\FC^\sharp)$. In the special case $\FC=(\CC,\bar\CC)$ where $\CC$ is a UMTC, note that $Z(\FC^\sharp)\simeq\CC$.
\end{rem}

\begin{defn}
Let $\FC$ and $\FD$ be enriched IUMFC's. The {\em Deligne tensor product} $\FC\boxtimes\FD$ is defined to be the enriched IUMFC $(\CC_1\boxtimes\CD_1,\CC_2\boxtimes\CD_2)$.
\end{defn}

\begin{figure}[h]
\begin{tabular}{c@{\quad\quad\quad\quad\quad\quad}c}
\begin{tikzpicture}[scale=.25,line width=1.2pt]
\fill[lightgray] (0,0) rectangle (12,9);
\draw (3,5) node{$\CC_2$};
\draw (9,5) node{$\CD_2$};
\draw (6,0) -- (6,9);
\draw (6,9) [->] -- (6,3) node[right]{$\CM_1$};
\draw (0,0) -- (12,0);
\draw (0,0) [->] -- (3,0) node[below]{$\CC_1$};
\draw (6,0) node{$\bullet$} node[below]{$\CM_0$};
\draw (6,0) [->] -- (9,0) node[below]{$\CD_1$};
\end{tikzpicture}
&
\begin{tikzpicture}[scale=.25,line width=1.2pt]
\fill[lightgray] (0,0) rectangle (12,9);
\draw (3,5) node{$\CD_2$};
\draw (9,5) node{$\CC_2$};
\draw (6,0) -- (6,9);
\draw (6,0) [->] -- (6,3) node[right]{$\CM_1$};
\draw (0,0) -- (12,0);
\draw (0,0) [->] -- (3,0) node[below]{$\CD_1$};
\draw (6,0) node{$\bullet$} node[below]{$\CM_0^\op$};
\draw (6,0) [->] -- (9,0) node[below]{$\CC_1$};
\end{tikzpicture}
\\ (a) & (b)
\end{tabular}
\caption{(a) depicts a $\FC$-$\FD$-bimodule $\FM=(\CM_0,\CM_1)$ where $\CM_1$ admits actions by $\CC_2,\CD_2$, and $\CM_0$ admits actions by $\CC_2,\CM_1,\CD_2,\CC_1,\CD_1$. (b) depicts the opposite $\FD$-$\FC$-bimodule $\FM^\op=(\CM_0^\op,\CM_1^\rev)$.}
\label{fig:bimod}
\end{figure}

\begin{defn}
Let $\FC$ and $\FD$ be enriched IUMFC's. A {\em $\FC$-$\FD$-bimodule} is a pair $\FM=(\CM_0,\CM_1)$ where $\CM_1$ is a closed multi-fusion $\CC_2$-$\CD_2$-bimodule and $\CM_0$ is a left $\CC_1\boxtimes_{\CC_2}\CM_1\boxtimes_{\CD_2}\CD_1^\rev$-module. See Figure \ref{fig:bimod}(a).
We use $\FM^\op$ to denote the $\FD$-$\FC$-bimodule $(\CM_0^\op,\CM_1^\rev)$. See Figure \ref{fig:bimod}(b).
\end{defn}


\begin{rem} \label{rem:dual-obj}
A $\FC$-$\FD$-bimodule is automatically a $\FD^\rev$-$\FC^\rev$-bimodule, a $\FC\boxtimes\FD^\rev$-$\bk$-bimodule as well as an $\bk$-$\FC^\rev\boxtimes\FD$-bimodule.
\end{rem}

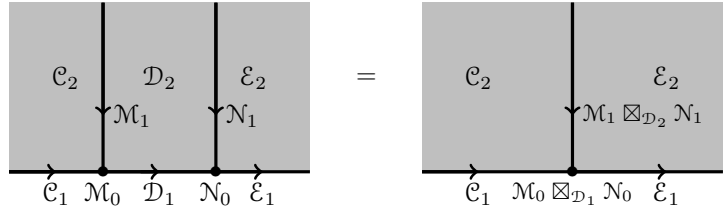
\begin{figure}[h]
\begin{tikzpicture}[scale=.25,line width=1.2pt]
\fill[lightgray] (0,0) rectangle (16,9);
\draw (3,5) node{$\CC_2$};
\draw (8,5) node{$\CD_2$};
\draw (13,5) node{$\CE_2$};
\draw (5,0) -- (5,9);
\draw (5,9) [->] -- (5,3) node[right]{$\CM_1$};
\draw (11,0) -- (11,9);
\draw (11,9) [->] -- (11,3) node[right]{$\CN_1$};
\draw (0,0) -- (16,0);
\draw (0,0) [->] -- (2.5,0) node[below]{$\CC_1$};
\draw (5,0) node{$\bullet$} node[below]{$\CM_0$};
\draw (0,0) [->] -- (8,0) node[below]{$\CD_1$};
\draw (11,0) node{$\bullet$} node[below]{$\CN_0$};
\draw (7,0) [->] -- (13.5,0) node[below]{$\CE_1$};
\draw (19,5) node{$=$};
\end{tikzpicture}
\quad
\begin{tikzpicture}[scale=.25,line width=1.2pt]
\fill[lightgray] (0,0) rectangle (16,9);
\draw (3,5) node{$\CC_2$};
\draw (13,5) node{$\CE_2$};
\draw (8,0) -- (8,9);
\draw (8,9) [->] -- (8,3) node[right]{\small $\CM_1\boxtimes_{\CD_2}\CN_1$};
\draw (0,0) -- (16,0);
\draw (0,0) [->] -- (3,0) node[below]{$\CC_1$};
\draw (8,0) node{$\bullet$} node[below]{\small $\CM_0\boxtimes_{\CD_1}\CN_0$};
\draw (8,0) [->] -- (13,0) node[below]{$\CE_1$};
\end{tikzpicture}
\caption{The tensor product $\FM\boxtimes_\FD\FN = (\CM_0\boxtimes_{\CD_1}\CN_0,\CM_1\boxtimes_{\CD_2}\CN_1)$ is depicted by either of these two equivalent figures. The right one is obtained from the left one by contracting the region labelled by $\CD_2$ and $\CD_1$. Such an equivalence under contraction will be implicitly used in the following figures.}
\label{fig:bimod-tensor}
\end{figure}

\begin{defn}
Let $\FC$, $\FD$, $\FE$ be enriched IUMFC's and let $\FM$ be a $\FC$-$\FD$-bimodule, $\FN$ be a $\FD$-$\FE$-bimodule. The {\em tensor product} $\FM\boxtimes_\FD\FN$ is defined to be the $\FC$-$\FE$-bimodule $(\CM_0\boxtimes_{\CD_1}\CN_0,\CM_1\boxtimes_{\CD_2}\CN_1)$. See Figure \ref{fig:bimod-tensor}. This is well defined because $\CM_1\boxtimes_{\CD_2}\CN_1$ is a closed multi-fusion $\CC_2$-$\CE_2$-bimodule by Theorem \ref{thm:closed}.
\end{defn}

\begin{figure}[h]
\begin{tikzpicture}[scale=.25,line width=1.2pt]
\fill[lightgray] (0,0) rectangle (12,9);
\draw (3,5) node{$\CC_2$};
\draw (9,5) node{$\CD_2$};
\draw (6,0) -- (6,9);
\draw (6,9) [->] -- (6,3) node[right]{$\CM_1$};
\draw (0,0) -- (12,0);
\draw (0,0) [->] -- (3,0) node[below]{$\CC_1$};
\draw (6,0) node{$\bullet$} node[below]{$\CM_0$};
\draw (6,0) [->] -- (9,0) node[below]{$\CD_1$};
\draw (17,5) node{$\xrightarrow{\quad F\quad}$};
\end{tikzpicture}
\quad\quad
\begin{tikzpicture}[scale=.25,line width=1.2pt]
\fill[lightgray] (0,0) rectangle (12,9);
\draw (3,5) node{$\CC_2$};
\draw (9,5) node{$\CD_2$};
\draw (6,0) -- (6,9);
\draw (6,9) [->] -- (6,6.5) node[right]{$\CM_1$};
\draw (6,3) node{$\bullet$} node[right]{$\CF$};
\draw (6,9) [->] -- (6,1) node[right]{$\CN_1$};
\draw (0,0) -- (12,0);
\draw (0,0) [->] -- (3,0) node[below]{$\CC_1$};
\draw (6,0) node{$\bullet$} node[below]{$\CN_0$};
\draw (6,0) [->] -- (9,0) node[below]{$\CD_1$};
\end{tikzpicture}
\caption{This figure depicts a $\FC$-$\FD$-bimodule functor $\FF=(F,\CF)$.}
\label{fig:fun}
\end{figure}

\begin{defn}
Let $\FC$, $\FD$ be enriched IUMFC's and $\FM$, $\FN$ be $\FC$-$\FD$-bimodules.
A {\em bimodule functor} $\FF: \FM \to \FN$ is a pair $\FF=(F,\CF)$ where $\CF$ is an invertible $\CM_1$-$\CN_1$-bimodule such that $\Fun_{\CM_1|\CN_1}(\CF,\CF) \simeq \bar\CC_2\boxtimes\CD_2$ as multi-fusion $Z(\CM_1)$-$Z(\CN_1)$-bimodules and $F$ is a left $\CC_1\boxtimes_{\CC_2}\CM_1\boxtimes_{\CD_2}\CD_1^\rev$-module functor $F: \CM_0 \to \CF\boxtimes_{\CN_1}\CN_0$. See Figure \ref{fig:fun}.
\end{defn}

\begin{rem} \label{rem:bimod-func}
According to Theorem \ref{thm:kz}, an invertible $\CM_1$-$\CN_1$-bimodule $\CF$ in the definition always exists and is unique up to equivalence.
\end{rem}

\begin{rem}
Note that an IUMFC $\CC$ can be regarded as an enriched IUMFC $(\CC,\bk)$ and a $\CC$-$\CD$-bimodule $\CM$ over IUMFC's $\CC$ and $\CD$ defines a $(\CC,\bk)$-$(\CD,\bk)$-bimodule $(\CM,\bk)$. Moreover, if $\CN$ is another $\CC$-$\CD$-bimodule, then a bimodule functor $F:\CM\to\CN$ defines a bimodule functor $(F,\bk):(\CM,\bk)\to(\CN,\bk)$. So, the definitions introduced above indeed generalize those notions in unenriched case.
\end{rem}

\begin{defn}
Let $\FM$, $\FN$, $\FL$ be $\FC$-$\FD$-bimodules and let $\FF: \FM \to \FN$ and $\FG: \FN \to \FL$ be bimodule functors. The {\em composition} $\FG\circ\FF: \FM \to \FL$ is defined to be the bimodule functor given by the composite left module functor
$$\CM_0 \xrightarrow{F} \CF\boxtimes_{\CN_1}\CN_0 \xrightarrow{\Id_\CF\boxtimes_{\CN_1}G} (\CF\boxtimes_{\CN_1}\CG) \boxtimes_{\CL_1} \CL_0.
$$
This is well defined due to Theorem \ref{thm:kz}, i.e. $\CF\boxtimes_{\CN_1}\CG$ satisfies the condition required by a bimodule functor.
\end{defn}

\begin{defn}
Let $\FM$, $\FN$ be $\FC$-$\FD$-bimodules, $\FM',\FN'$ be $\FD$-$\FE$-bimodules, and let $\FF: \FM \to \FN$ and $\FG: \FM' \to \FN'$ be bimodule functors. The {\em tensor product} $\FF\boxtimes_\FD\FG: \FM\boxtimes_\FD\FM' \to \FN\boxtimes_\FD\FN'$ is defined to be the bimodule functor $(F\boxtimes_{\CD_1}G,\CF\boxtimes_{\CD_2}\CG)$.
This is well defined in view of Corollary \ref{cor:inv-comp}(3).
\end{defn}

\begin{defn}
Let $\FM$, $\FN$ be $\FC$-$\FD$-bimodules and let $\FF,\FG:\FM\to\FN$ be bimodule functors. A {\em natural transformation} $\xi:\FF\to\FG$ consists of an $\CM_1$-$\CN_1$-bimodule equivalence $\xi':\CF\to\CG$ and a natural transformation of left module functors $\xi: (\xi'\boxtimes_{\CN_1}\Id_{\CN_0})\circ F \to G$. A {\em modification} between two natural transformations $\xi,\zeta:\FF\to\FG$ is an isomorphism $\gamma:\xi'\to\zeta'$ such that $\xi = \zeta\circ((\gamma\boxtimes_{\CN_1}\Id_{\Id_{\CN_0}})\circ\Id_F)$.
\end{defn}

\begin{prop} \label{prop:inv-equiv0}
Let $\xi:\FF\to\FG$ be a natural transformation between two bimodule functors $\FF,\FG:\FM\to\FN$. The following conditions are equivalent:
\begin{enumerate}
\item $\xi$ is an equivalence, i.e. there is a natural transformation $\zeta:\FG\to\FF$ such that $\zeta\circ\xi\simeq\Id_\FF$ and $\xi\circ\zeta\simeq\Id_\FG$.
\item The natural transformation of left module functors $\xi: (\xi'\boxtimes_{\CN_1}\Id_{\CN_0})\circ F \to G$ is an isomorphism.
\end{enumerate}
\end{prop}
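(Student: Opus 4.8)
The plan is to exploit the fact that, by the very definition of a natural transformation of bimodule functors, the first component $\xi'$ is \emph{always} a bimodule equivalence; so the only possible obstruction to invertibility of $\xi=(\xi',\xi)$ lies in its second component, the module natural transformation $\xi\colon(\xi'\boxtimes_{\CN_1}\Id_{\CN_0})\circ F\to G$. Throughout I would use only two elementary facts about functor categories: an equivalence of categories is conservative, and a fully faithful functor reflects split monomorphisms and split epimorphisms.

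For $(1)\Rightarrow(2)$, suppose $\zeta=(\zeta',\zeta)\colon\FG\to\FF$, together with modifications $\gamma\colon\zeta\circ\xi\to\Id_\FF$ and $\gamma'\colon\xi\circ\zeta\to\Id_\FG$, realizes the equivalence. Unwinding the (vertical) composition of natural transformations, the second component of $\zeta\circ\xi$ is the composite $\zeta\bullet\big((\zeta'\boxtimes_{\CN_1}\Id_{\CN_0})\ast\xi\big)$ of module natural transformations, obtained by first post-whiskering $\xi$ along the functor $\zeta'\boxtimes_{\CN_1}\Id_{\CN_0}$ and then applying $\zeta$. By the defining equation of the modification $\gamma$, this composite equals the unit coherence isomorphism of $\Id_\FF$ precomposed with $(\gamma\boxtimes_{\CN_1}\Id_{\CN_0})\circ F$, and is therefore invertible. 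Hence $(\zeta'\boxtimes_{\CN_1}\Id_{\CN_0})\ast\xi$ has a left inverse, i.e.\ is a split monomorphism; and since $\zeta'$ is a bimodule equivalence, the functor $\zeta'\boxtimes_{\CN_1}\Id_{\CN_0}$ is an equivalence, so post-whiskering along it is fully faithful and reflects split monomorphisms, whence $\xi$ itself has a left inverse. Applying the same argument to $\xi\circ\zeta$ and $\gamma'$, whose second component is $\xi\bullet\big((\xi'\boxtimes_{\CN_1}\Id_{\CN_0})\ast\zeta\big)$ and likewise invertible, we get that $\xi$ has a right inverse. Possessing both a left and a right inverse, $\xi$ is invertible, which is (2).

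For $(2)\Rightarrow(1)$, assume $\xi$ is invertible. I would first promote the bimodule equivalence $\xi'\colon\CF\to\CG$ to an adjoint equivalence: choose a quasi-inverse bimodule functor $\bar\xi'\colon\CG\to\CF$ together with bimodule natural isomorphisms $\varepsilon\colon\bar\xi'\circ\xi'\to\Id_\CF$ and $\eta\colon\Id_\CG\to\xi'\circ\bar\xi'$ satisfying the triangle identities. Then define $\zeta=(\zeta',\zeta)\colon\FG\to\FF$ by $\zeta':=\bar\xi'$ and $\zeta$ equal to the composite of module natural transformations
\begin{align*}
(\bar\xi'\boxtimes_{\CN_1}\Id_{\CN_0})\circ G &\xrightarrow{(\bar\xi'\boxtimes_{\CN_1}\Id_{\CN_0})\ast\xi^{-1}} \big((\bar\xi'\circ\xi')\boxtimes_{\CN_1}\Id_{\CN_0}\big)\circ F \\
&\xrightarrow{(\varepsilon\boxtimes_{\CN_1}\Id_{\CN_0})\circ F} F,
\end{align*}
which is a natural transformation of left $\CC_1\boxtimes_{\CC_2}\CM_1\boxtimes_{\CD_2}\CD_1^\rev$-module functors. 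I claim $\varepsilon$, regarded as an isomorphism $(\zeta\circ\xi)'=\bar\xi'\circ\xi'\to\Id_\CF=(\Id_\FF)'$, is a modification $\zeta\circ\xi\to\Id_\FF$: unwinding the composition and cancelling $\xi^{-1}\bullet\xi=\Id$ shows that the second component of $\zeta\circ\xi$ equals, up to the unit coherence isomorphism, $(\varepsilon\boxtimes_{\CN_1}\Id_{\CN_0})\circ F$, which is precisely the equality demanded by the definition of a modification. Symmetrically, $\eta^{-1}$ should be a modification $\xi\circ\zeta\to\Id_\FG$. These two modifications together exhibit $\zeta$ as a two-sided inverse of $\xi$ in the sense of (1).

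The hard part is the verification that $\eta^{-1}$ is indeed a modification $\xi\circ\zeta\to\Id_\FG$: unlike the case of $\varepsilon$, the required equality of second components is not purely formal. I expect it to reduce to rewriting $\xi'\ast\varepsilon$ as $\eta^{-1}\ast\xi'$ via one of the triangle identities and then invoking the naturality of $\xi$ to commute the resulting isomorphism past $\xi^{-1}$, all the while keeping track of the unit and associativity coherence isomorphisms of the pseudofunctor $-\boxtimes_{\CN_1}\CN_0$. Every other step is routine, resting only on the fact that equivalences of categories are conservative and reflect split monomorphisms and split epimorphisms.
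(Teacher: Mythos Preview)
Your proposal is correct, and for $(2)\Rightarrow(1)$ it is essentially the paper's argument, only with more care taken over the adjoint-equivalence data and the verification that the resulting modifications satisfy the required identity; the paper simply writes down the inverse bimodule functor using $\xi'^{-1}$ and $\xi^{-1}$ and leaves the rest implicit.

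For $(1)\Rightarrow(2)$ the two arguments diverge. You show that the composite second component of $\zeta\circ\xi$ is invertible, deduce that $\xi$ is a split monomorphism after whiskering by the equivalence $\zeta'\boxtimes_{\CN_1}\Id_{\CN_0}$, reflect this back to $\xi$, and then run the symmetric argument to get a right inverse. The paper instead writes the inverse down directly: it takes
\[
G \;\simeq\; (\xi'\boxtimes_{\CN_1}\Id_{\CN_0})\circ(\zeta'\boxtimes_{\CN_1}\Id_{\CN_0})\circ G \xrightarrow{\ \Id\circ\zeta\ } (\xi'\boxtimes_{\CN_1}\Id_{\CN_0})\circ F,
\]
using only the isomorphism $\xi'\circ\zeta'\simeq\Id_\CG$ supplied by the modification $\xi\circ\zeta\simeq\Id_\FG$, and asserts this is inverse to $\xi$. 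Your route is a bit more conceptual and avoids checking that this candidate really is a two-sided inverse, at the cost of the extra reflection-of-split-monos step; the paper's route is shorter but leaves the final verification to the reader. Both are fine.
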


\begin{proof}
$(1)\Rightarrow(2)$. The natural transformation of left module functors $G \simeq (\xi'\boxtimes_{\CN_1}\Id_{\CN_0})\circ(\zeta'\boxtimes_{\CN_1}\Id_{\CN_0})\circ G \xrightarrow{\Id\circ\zeta} (\xi'\boxtimes_{\CN_1}\Id_{\CN_0})\circ F$ is inverse to $\xi$.

$(2)\Rightarrow(1)$. The desired natural transformation $\zeta:\FG\to\FF$ is given by $(\xi'^{-1}\boxtimes_{\CN_1}\Id_{\CN_0})\circ G \xrightarrow{\Id\circ\xi^{-1}} (\xi'^{-1}\boxtimes_{\CN_1}\Id_{\CN_0})\circ(\xi'\boxtimes_{\CN_1}\Id_{\CN_0})\circ F = F$.
\end{proof}

\begin{prop} \label{prop:inv-equiv}
Let $\FF:\FM\to\FN$ be a bimodule functor between $\FC$-$\FD$-bimodules. The following conditions are equivalent:
\begin{enumerate}
\item $\FF$ is a bimodule equivalence, i.e. there is a bimodule functor $\FG:\FN\to\FM$ such that $\FG\circ\FF\simeq\Id_\FM$ and $\FF\circ\FG\simeq\Id_\FN$.
\item The left module functor $F: \CM_0 \to \CF\boxtimes_{\CN_1}\CN_0$ is an equivalence.
\end{enumerate}
\end{prop}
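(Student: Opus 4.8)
The plan is to prove the two implications separately. Both rest on Proposition~\ref{prop:inv-equiv0}, which lets one replace natural transformations of bimodule functors by isomorphisms of their underlying module functors, and on the fact that the $1$-component of a bimodule functor is an invertible bimodule over a UMFC, so that relative tensor product with it, or with its inverse, is an equivalence (of module categories, and of the attendant categories of module functors).

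$(1)\Rightarrow(2)$: Let $\FG=(G,\CG)\colon\FN\to\FM$ be a quasi-inverse of $\FF$. By the definition of composition, $\FG\circ\FF$ has $1$-component $\CF\boxtimes_{\CN_1}\CG$ and $0$-component $K:=(\Id_\CF\boxtimes_{\CN_1}G)\circ F\colon\CM_0\to(\CF\boxtimes_{\CN_1}\CG)\boxtimes_{\CM_1}\CM_0$, while $\Id_\FM$ has $1$-component $\CM_1$ and $0$-component $\Id_{\CM_0}$ (identifying $\CM_1\boxtimes_{\CM_1}\CM_0\simeq\CM_0$). An equivalence $\FG\circ\FF\simeq\Id_\FM$ then supplies an $\CM_1$-$\CM_1$-bimodule equivalence $\xi'\colon\CF\boxtimes_{\CN_1}\CG\xrightarrow{\sim}\CM_1$ together with, by Proposition~\ref{prop:inv-equiv0}, a natural isomorphism $(\xi'\boxtimes_{\CM_1}\Id_{\CM_0})\circ K\cong\Id_{\CM_0}$. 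Since $\xi'\boxtimes_{\CM_1}\Id_{\CM_0}$ is an equivalence of categories, $K$ is an equivalence, and hence $F$ admits the left inverse $K^{-1}\circ(\Id_\CF\boxtimes_{\CN_1}G)$. Symmetrically, $\FF\circ\FG\simeq\Id_\FN$ makes $L:=(\Id_\CG\boxtimes_{\CM_1}F)\circ G$ an equivalence, so $\Id_\CG\boxtimes_{\CM_1}F$ has a right inverse; applying the autoequivalence $\CG^{-1}\boxtimes_{\CN_1}(-)$ (valid as $\CG$ is invertible) and the canonical identification $\CG^{-1}\boxtimes_{\CN_1}\CG\simeq\CM_1$ shows that $F$ too has a right inverse. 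A module functor with both a left and a right inverse is an equivalence, which is $(2)$.

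$(2)\Rightarrow(1)$: Assume $F$ is an equivalence. Put $\CG:=\CF^\op$; by Theorem~\ref{thm:dual-mod-cat} this is inverse to $\CF$ — the duality maps $u,v$ there being equivalences because $\CF$ is invertible — with $\alpha:=u\colon\CN_1\xrightarrow{\sim}\CF^\op\boxtimes_{\CM_1}\CF$ and $\beta:=v\colon\CF\boxtimes_{\CN_1}\CF^\op\xrightarrow{\sim}\CM_1$. That $\CF^\op$ carries the normalization $\Fun_{\CN_1|\CM_1}(\CF^\op,\CF^\op)\simeq\bar\CC_2\boxtimes\CD_2$ required of a bimodule-functor $1$-component follows by combining the uniqueness in Remark~\ref{rem:bimod-func} with \eqref{eq:MMNN} and Corollary~\ref{cor:uib}, which together identify the normalized invertible $\CN_1$-$\CM_1$-bimodule as the inverse of $\CF$. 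Since $F$ is an equivalence, so is the composite $e:=(\alpha^{-1}\boxtimes_{\CN_1}\Id_{\CN_0})\circ(\Id_{\CF^\op}\boxtimes_{\CM_1}F)\colon\CF^\op\boxtimes_{\CM_1}\CM_0\to\CN_0$ (a $\CC_1\boxtimes_{\CC_2}\CN_1\boxtimes_{\CD_2}\CD_1^\rev$-module equivalence, after $\CN_1\boxtimes_{\CN_1}\CN_0\simeq\CN_0$); let $G\colon\CN_0\to\CF^\op\boxtimes_{\CM_1}\CM_0$ be a module quasi-inverse of $e$. Then $\FG:=(G,\CF^\op)\colon\FN\to\FM$ is a bimodule functor, and it remains to exhibit equivalences $\FG\circ\FF\simeq\Id_\FM$ and $\FF\circ\FG\simeq\Id_\FN$. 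For the first, the $1$-components are identified by $\beta$, and, by Proposition~\ref{prop:inv-equiv0}, it suffices to produce a natural isomorphism $(\beta\boxtimes_{\CM_1}\Id_{\CM_0})\circ(\Id_\CF\boxtimes_{\CN_1}G)\circ F\cong\Id_{\CM_0}$; this is assembled from the defining isomorphism $e\circ G\cong\Id_{\CN_0}$ together with the two triangle identities for the duality $\CF\dashv\CF^\op$ of Theorem~\ref{thm:dual-mod-cat}. The second equivalence is proved the same way, with $\CF$ and $\CF^\op$, and $\alpha$ and $\beta$, interchanged.

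I expect the main obstacle to be this last verification in $(2)\Rightarrow(1)$: organizing the coherence data — the associators and unit constraints of the various relative tensor products, together with the two triangle identities for $\CF\dashv\CF^\op$ — so that the composite $(\beta\boxtimes_{\CM_1}\Id_{\CM_0})\circ(\Id_\CF\boxtimes_{\CN_1}G)\circ F$ trivializes canonically, all while keeping track of the module structures. This is routine but lengthy; the genuine input is carried by Theorem~\ref{thm:dual-mod-cat}, Theorem~\ref{thm:kz} and Proposition~\ref{prop:inv-equiv0}, all already established.
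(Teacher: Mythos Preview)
Your proposal is correct and follows essentially the same route as the paper's proof: for $(1)\Rightarrow(2)$ you extract an inverse to $F$ from the data of $\FG$, and for $(2)\Rightarrow(1)$ you build $\FG$ with $1$-component $\CF^\op$ and $0$-component essentially $\Id_{\CF^\op}\boxtimes_{\CM_1}F^{-1}$ composed with the canonical equivalence $\CN_0\simeq\CF^\op\boxtimes_{\CM_1}\CF\boxtimes_{\CN_1}\CN_0$. The paper's argument is more terse---it simply declares that the composite $\CF\boxtimes_{\CN_1}\CN_0 \xrightarrow{\Id_\CF\boxtimes_{\CN_1}G} \CF\boxtimes_{\CN_1}\CG\boxtimes_{\CM_1}\CM_0 \simeq \CM_0$ is inverse to $F$, and for the converse just writes down $G$ without verifying the coherence you flag---but the underlying constructions coincide.
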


\begin{proof}
$(1)\Rightarrow(2)$. The left module functor $\CF\boxtimes_{\CN_1}\CN_0 \xrightarrow{\Id_\CF\boxtimes_{\CN_1}G} \CF\boxtimes_{\CN_1}\CG\boxtimes_{\CM_1}\CM_0 \simeq \CM_0$ is inverse to $F$.

$(2)\Rightarrow(1)$. The desired bimodule functor $\FG:\FN\to\FM$ is given by the left module functor $\CN_0 \simeq \CF^\op\boxtimes_{\CM_1}\CF\boxtimes_{\CN_1}\CN_0 \xrightarrow{\Id_{\CF^\op}\boxtimes_{\CM_1}F^{-1}} \CF^\op\boxtimes_{\CM_1}\CM_0$.
\end{proof}

\begin{exam} \label{exam:trivial}
Let $\CC$ be a unitary category. We have an $\bk$-$\bk$-bimodule equivalence $(\CC,\Fun(\CC,\CC)) \simeq \bk$ given by the left $\Fun(\CC,\CC)$-module equivalence $\CC\simeq\CC\boxtimes_\bk\bk$.
\end{exam}

\begin{figure}[h]
\begin{tikzpicture}[scale=.25,line width=1.2pt]
\fill[lightgray] (0,0) rectangle (12,9);
\draw (3,5) node{$\CC_2$};
\draw (9,6) node{$\CD_2$};
\draw (6,9) -- (6,3) -- (12,3);
\draw (6,9) [->] -- (6,6.5) node[left]{$\CM_1$};
\draw (6,3) node{$\bullet$} node[left]{$\CM_1$};
\draw (6,3) [->] -- (9,3) node[above]{$\CM_1$};
\draw[dashed] (6,3) -- (6,0);
\draw (9,1.5) node{$\CC_2$};
\draw (0,0) -- (12,0);
\draw (0,0) [->] -- (3,0) node[below]{$\CC_1$};
\draw (6,0) node{$\bullet$} node[below]{$\CC_1$};
\draw (6,0) [->] -- (9,0) node[below]{$\CC_1$};
\end{tikzpicture}
\quad\quad\quad\quad\quad\quad
\begin{tikzpicture}[scale=.25,line width=1.2pt]
\fill[lightgray] (0,0) rectangle (12,9);
\draw (9,5) node{$\CC_2$};
\draw (3,6) node{$\CD_2$};
\draw (6,9) -- (6,3) -- (0,3);
\draw (6,3) [->] -- (6,6.5) node[right]{$\CM_1$};
\draw (6,3) node{$\bullet$} node[right]{$\CM_1^\op$};
\draw (0,3) [->] -- (3,3) node[above]{$\CM_1$};
\draw[dashed] (6,3) -- (6,0);
\draw (3,1.5) node{$\CC_2$};
\draw (0,0) -- (12,0);
\draw (0,0) [->] -- (3,0) node[below]{$\CC_1$};
\draw (6,0) node{$\bullet$} node[below]{$\CC_1^\op$};
\draw (6,0) [->] -- (9,0) node[below]{$\CC_1$};
\end{tikzpicture}
\caption{A $\FC$-$\FD$-bimodule $\FM=(\CC_1\boxtimes_{\CC_2}\CM_1,\CM_1)$ is depicted on the left where $\FC=(\CC_1,\CC_2)$ and $\FD=(\CC_1\boxtimes_{\CC_2}\CM_1,\CD_2)$. The $\FD$-$\FC$-bimodule $\FM^\op \simeq (\CM_1^\op\boxtimes_{\CC_2}\CC_1^\op,\CM_1^\rev)$ is depicted on the right.}
\label{fig:bimod-equiv}
\end{figure}

\begin{figure}[h]
\begin{tikzpicture}[scale=.3,line width=1.2pt]
\fill[lightgray] (0,0) rectangle (12,9);
\draw (1.5,5) node{$\CC_2$};
\draw (6,6) node{$\CD_2$};
\draw (10.5,5) node{$\CC_2$};
\draw (6,1.5) node{$\CC_2$};
\draw (4,9) -- (4,3) -- (8,3) -- (8,9);
\draw[dashed] (4,9) [->] -- (4,6.5) node[left]{$\CM_1$};
\draw[dashed] (4,3) [->] -- (6,3) node[above]{$\CM_1$};
\draw[dashed] (8,3) [->] -- (8,6.5) node[right]{$\CM_1$};
\draw (4,3) node{$\bullet$} node[left]{$\CM_1$};
\draw (8,3) node{$\bullet$} node[right]{$\CM_1^\op$};
\draw[dashed] (4,3) -- (4,0);
\draw[dashed] (8,3) -- (8,0);
\draw (0,0) -- (12,0);
\draw (0,0) [->] -- (1.5,0) node[below]{$\CC_1$};
\draw (4,0) node{$\bullet$} node[below]{$\CC_1$};
\draw (0,0) [->] -- (6,0) node[below]{$\CC_1$};
\draw (8,0) node{$\bullet$} node[below]{$\;\CC_1^\op$};
\draw (6,0) [->] -- (10.5,0) node[below]{$\CC_1$};
\draw (17,5) node{$\xrightarrow[\simeq]{\quad\alpha\quad}$};
\end{tikzpicture}
\quad\quad
\begin{tikzpicture}[scale=.3,line width=1.2pt]
\fill[lightgray] (0,0) rectangle (12,9);
\draw (2.5,3) node{$\CC_2$};
\draw (9.5,3) node{$\CC_2$};
\draw (6,7.5) node{$\CD_2$};
\draw[dashed] (6,5) -- (6,0);
\draw[dashed] (6,5) [->] -- (6,2) node[right]{$\CC_2$};
\draw (4,9) -- (6,5);
\draw (4,9) [->] -- (5,7) node[left]{$\CM_1$};
\draw (6,5) -- (8,9);
\draw (6,5) [->] -- (7,7) node[right]{$\CM_1$};
\draw (6,5) node{$\bullet$} node[right]{$\CM_1$};
\draw (0,0) -- (12,0);
\draw (0,0) [->] -- (3,0) node[below]{$\CC_1$};
\draw (6,0) node{$\bullet$} node[below]{$\CC_1$};
\draw (6,0) [->] -- (9,0) node[below]{$\CC_1$};
\end{tikzpicture}
\\~\\
\begin{tikzpicture}[scale=.3,line width=1.2pt]
\fill[lightgray] (0,0) rectangle (12,9);
\draw (3,6) node{$\CD_2$};
\draw (3,1.5) node{$\CC_2$};
\draw (9,6) node{$\CD_2$};
\draw (9,1.5) node{$\CC_2$};
\draw[dashed] (6,9) -- (6,0);
\draw[dashed] (6,9) [->] -- (6,7) node[right]{$\CD_2$};
\draw (0,3) -- (12,3);
\draw (0,3) [->] -- (3,3) node[above]{$\CM_1$};
\draw (6,3) node{$\bullet$} node[above]{$\CM_1$};
\draw (6,3) [->] -- (9,3) node[above]{$\CM_1$};
\draw (0,0) -- (12,0);
\draw (0,0) [->] -- (3,0) node[below]{$\CC_1$};
\draw (6,0) node{$\bullet$} node[below]{$\CC_1$};
\draw (6,0) [->] -- (9,0) node[below]{$\CC_1$};
\draw (17,5) node{$\xrightarrow[\simeq]{\quad\beta\quad}$};
\end{tikzpicture}
\quad\quad
\begin{tikzpicture}[scale=.3,line width=1.2pt]
\fill[lightgray] (0,0) rectangle (12,9);
\draw (1.5,7) node{$\CD_2$};
\draw (10.5,7) node{$\CD_2$};
\draw (6,1.5) node{$\CC_2$};
\draw[dashed] (6,9) -- (6,6);
\draw[dashed] (6,9) [->] -- (6,7.5) node[right]{$\CD_2$};
\draw (0,3) -- (4,3) -- (6,6) -- (8,3) -- (12,3);
\draw (0,3) [->] -- (1.5,3) node[above]{$\CM_1$};
\draw (4,3) [->] -- (5,4.5) node[left]{$\CM_1$};
\draw (6,6) [->] -- (7,4.5) node[right]{$\CM_1$};
\draw (9,3) [->] -- (10.5,3) node[above]{$\CM_1$};
\draw (4,3) node{$\bullet$} node[below left]{$\CM_1^\op$};
\draw (8,3) node{$\bullet$} node[below right]{$\CM_1$};
\draw[dashed] (4,0) -- (4,3);
\draw[dashed] (8,3) -- (8,0);
\draw (6,6) node{$\bullet$} node[right]{$\CM_1$};
\draw (0,0) -- (12,0);
\draw (0,0) [->] -- (1.5,0) node[below]{$\CC_1$};
\draw (4,0) node{$\bullet$} node[below]{$\CC_1^\op$};
\draw (0,0) [->] -- (6.3,0) node[below]{$\CC_1$};
\draw (8,0) node{$\bullet$} node[below]{$\CC_1$};
\draw (6,0) [->] -- (10.5,0) node[below]{$\CC_1$};
\end{tikzpicture}
\caption{These figures depict two bimodule equivalences $(\alpha,\CM_1): \FM\boxtimes_\FD\FM^\op \to \FC$ and $(\beta,\CM_1): \FD \to \FM^\op\boxtimes_\FC\FM$.}
\label{fig:bimod-equiv2}
\end{figure}

\begin{prop} \label{prop:inv-mor}
Let $\FM$ be a $\FC$-$\FD$-bimodule. Suppose $\CM_0 = \CD_1 = \CC_1\boxtimes_{\CC_2}\CM_1$ (see Figure \ref{fig:bimod-equiv}). Then the $\FD$-$\FC$-bimodule $\FM^\op$ is inverse to $\FM$. That is, $\FM\boxtimes_\FD\FM^\op\simeq\FC$ as $\FC$-$\FC$-bimodules and $\FM^\op\boxtimes_\FC\FM\simeq\FD$ as $\FD$-$\FD$-bimodules.
\end{prop}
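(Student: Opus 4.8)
The plan is to exhibit the two bimodule equivalences $(\alpha,\CM_1)\colon\FM\boxtimes_\FD\FM^\op\to\FC$ and $(\beta,\CM_1)\colon\FD\to\FM^\op\boxtimes_\FC\FM$ of Figure \ref{fig:bimod-equiv2}. Recall that a bimodule functor is a pair $(F,\CF)$ in which $\CF$ is an invertible bimodule between the two ``second components'' subject to a $\Fun$-condition and $F$ is a module functor between the two ``first components'', and that by Proposition \ref{prop:inv-equiv} such a pair is a bimodule equivalence as soon as $F$ is an equivalence of unitary categories. So in each case the work splits into: (i) supply $\CF$; (ii) supply $F$; (iii) check $F$ is an equivalence.

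Step (i) is immediate from Corollary \ref{cor:inv-bimod}. Unwinding $\boxtimes_\FD$ and $(-)^\op$, the second component of $\FM\boxtimes_\FD\FM^\op$ is $\CM_1\boxtimes_{\CD_2}\CM_1^\rev$ while that of $\FC$ (as the identity $\FC$-$\FC$-bimodule) is $\CC_2$; Corollary \ref{cor:inv-bimod}(2) states precisely that $\CM_1$, viewed as an $(\CM_1\boxtimes_{\CD_2}\CM_1^\rev)$-$\CC_2$-bimodule, is invertible with $\Fun_{\CM_1\boxtimes_{\CD_2}\CM_1^\rev|\CC_2}(\CM_1,\CM_1)\simeq\bar\CC_2\boxtimes\CC_2$, which is the condition imposed on the $\CF$-part of $(\alpha,\CM_1)$. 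Likewise the second component of $\FM^\op\boxtimes_\FC\FM$ is $\CM_1^\rev\boxtimes_{\CC_2}\CM_1$, and Corollary \ref{cor:inv-bimod}(3) provides $\CM_1$ as the required invertible $\CD_2$-$(\CM_1^\rev\boxtimes_{\CC_2}\CM_1)$-bimodule with $\Fun_{\CD_2|\CM_1^\rev\boxtimes_{\CC_2}\CM_1}(\CM_1,\CM_1)\simeq\bar\CD_2\boxtimes\CD_2$.

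For steps (ii)--(iii) the hypothesis $\CM_0=\CD_1=\CC_1\boxtimes_{\CC_2}\CM_1$ is essential: under it $\CM_0$ is the regular $\CD_1$-bimodule, its left $\CC_1\boxtimes_{\CC_2}\CM_1$-action being left multiplication in $\CD_1$. For $\alpha$: taking $\CM_0=\CD_1$ as a right $\CD_1$-module gives $\CM_0\boxtimes_{\CD_1}\CM_0^\op\simeq\CM_0^\op$, and then \eqref{eq:op} applied to $\CM_0=\CC_1\boxtimes_{\CC_2}\CM_1$, together with the canonical equivalences $\CC_1^\op\simeq\CC_1$ and $\CM_1^\rev\simeq\CM_1^\op$ (both induced by $x\mapsto x^*$), identifies $\CM_0\boxtimes_{\CD_1}\CM_0^\op$ with $\CM_1\boxtimes_{\CC_2}\CC_1$, i.e. with the codomain $\CF\boxtimes_{\CC_2}\CC_1$ of the would-be $F$-part of $(\alpha,\CM_1)$; this composite of equivalences \emph{is} that $F$-part, which is therefore an equivalence. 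For $\beta$: \eqref{eq:op} together with the cancellation $\CC_1^\op\boxtimes_{\CC_1}\CC_1\simeq\CC_1$ (a special case of \eqref{eq:xRy}) identify $\CM_0^\op\boxtimes_{\CC_1}\CM_0$ with $\CM_1^\op\boxtimes_{\CC_2}\CD_1$; transporting along the invertible $\CD_2$-$(\CM_1^\rev\boxtimes_{\CC_2}\CM_1)$-bimodule $\CM_1$ of step (i) then yields an equivalence $\CD_1\simeq\CM_1\boxtimes_{\CM_1^\rev\boxtimes_{\CC_2}\CM_1}(\CM_0^\op\boxtimes_{\CC_1}\CM_0)$, which is the $F$-part $\beta$.

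The only point that is not bookkeeping — and the step I expect to be the main obstacle — is to check that the equivalences just written down intertwine the \emph{full} left-module structures ($\CC_1\boxtimes_{\CC_2}(\CM_1\boxtimes_{\CD_2}\CM_1^\rev)\boxtimes_{\CC_2}\CC_1^\rev$ on the $\FC$-side, $\CD_1\boxtimes_{\CD_2}\CD_1^\rev$ on the $\FD$-side), not merely the factors that are visibly contracted; this amounts to chasing the residual actions through \eqref{eq:xRy}, \eqref{eq:op} and Corollary \ref{cor:inv-bimod}, with Figure \ref{fig:bimod-equiv2} as the guide. Once this is verified, Proposition \ref{prop:inv-equiv} promotes $(\alpha,\CM_1)$ and $(\beta,\CM_1)$ to bimodule equivalences, giving $\FM\boxtimes_\FD\FM^\op\simeq\FC$ and $\FM^\op\boxtimes_\FC\FM\simeq\FD$ as claimed.
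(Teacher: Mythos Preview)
Your proposal is correct and follows essentially the same approach as the paper's proof: supply $\CM_1$ as the $\CF$-part via Corollary \ref{cor:inv-bimod}(2),(3), build $\alpha$ and $\beta$ as the evident chains of equivalences through \eqref{eq:op} and the hypothesis $\CM_0=\CD_1=\CC_1\boxtimes_{\CC_2}\CM_1$, and then invoke Proposition \ref{prop:inv-equiv}. The paper's argument is line-for-line the same, including leaving the compatibility of the module structures to the figures rather than spelling it out; your explicit flagging of that point as the residual bookkeeping is accurate.
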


\begin{proof}
We have $\FM\boxtimes_\FD\FM^\op = (\CM_0\boxtimes_{\CD_1}\CM_0^\op,\CM_1\boxtimes_{\CD_2}\CM_1^\rev) \simeq (\CD_1^\op,\CM_1\boxtimes_{\CD_2}\CM_1^\rev)$.
According to Corollary \ref{cor:inv-bimod}(2) and Proposition \ref{prop:inv-equiv}, the invertible $\CM_1\boxtimes_{\CD_2}\CM_1^\rev$-$\CC_2$ bimodule $\CM_1$ and the left $\CD_1\boxtimes_{\CD_2}\CD_1^\rev$-module equivalence $\alpha: \CD_1^\op = (\CC_1\boxtimes_{\CC_2}\CM_1)^\op \simeq \CM_1^\op\boxtimes_{\CC_2}\CC_1^\op \simeq \CM_1\boxtimes_{\CC_2}\CC_1$ defines a $\FC$-$\FC$-bimodule equivalence $\FM\boxtimes_\FD\FM^\op \simeq \FC$. See the top figure in Figure \ref{fig:bimod-equiv2}.

We have $\FM^\op\boxtimes_\FC\FM
= (\CM_0^\op\boxtimes_{\CC_1}\CM_0,\CM_1^\rev\boxtimes_{\CC_2}\CM_1)
\simeq (\CM_1^\op\boxtimes_{\CC_2}\CC_1^\op\boxtimes_{\CC_2}\CM_1,\CM_1^\rev\boxtimes_{\CC_2}\CM_1)$.
According to Corollary \ref{cor:inv-bimod}(3) and Proposition \ref{prop:inv-equiv}, the invertible $\CD_2$-$\CM_1^\rev\boxtimes_{\CC_2}\CM_1$-bimodule $\CM_1$ and the left $\CD_1\boxtimes_{\CD_2}\CD_1^\rev$-module equivalence
$\beta: \CD_1 \simeq \CD_1^\op
\simeq \CM_1^\op\boxtimes_{\CC_2}\CC_1^\op
\simeq \CM_1\boxtimes_{\CM_1^\rev\boxtimes_{\CC_2}\CM_1}(\CM_1^\op\boxtimes_{\CC_2}\CC_1^\op\boxtimes_{\CC_2}\CM_1)$
defines a $\FD$-$\FD$-bimodule equivalence $\FD \simeq \FM^\op\boxtimes_\FC\FM$. See the bottom figure in Figure \ref{fig:bimod-equiv2}.
\end{proof}

\begin{exam} \label{exam:double}
$(1)$ If $\CC$ is an IUMFC, $(\CC,\CC)$ is an invertible $\bk$-$(\CC,Z(\CC))$-bimodule.
$(2)$ If $\CC$ is a UMTC, $(\CC,\CC)$ is an invertible $(\CC\boxtimes\bar\CC,\bar\CC\boxtimes\CC)$-$(\CC,\bk)$-bimodule.
\end{exam}

\section{A symmetric monoidal (4,3)-category}

We have a symmetric monoidal (4,3)-category $\mathbf{IUMFC}^{\mathrm{en}}$ constructed as follows. An object is an enriched IUMFC $\FC$. A morphism $\FC\leftarrow\FD$ is a $\FC$-$\FD$-bimodule. A 2-morphism is a bimodule functor. A 3-morphism is a natural transformation of bimodule functors. A 4-isomorphism is a modification. The tensor unit is $\bk$ and the tensor product is Deligne tensor product $\boxtimes$.

\begin{thm} \label{thm:dual}
The symmetric monoidal (4,3)-category $\mathbf{IUMFC}^{\mathrm{en}}$ has duals.
\end{thm}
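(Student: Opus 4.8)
To show that $\mathbf{IUMFC}^{\mathrm{en}}$ has duals, I must verify that every object has a dual, and that every $k$-morphism for $k=1,2,3$ has both adjoints. The overall strategy is to reduce each of these statements to the corresponding dualities established in Section 2 and 3 — that is, to Theorem \ref{thm:dual-mod-cat}, Corollary \ref{cor:inv-bimod}, Corollary \ref{cor:inv-comp} and Proposition \ref{prop:inv-mor} — applied componentwise to the pairs $(\CM_0,\CM_1)$. The guiding principle is that an enriched IUMFC is a ``layered'' object: the UMTC layer $\CC_2$ and the IUMFC layer $\CC_1$, and dualities at the enriched level should be assembled from dualities at each layer.

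\emph{Duals of objects.} For an enriched IUMFC $\FC=(\CC_1,\CC_2)$ I claim the dual is $\FC^\rev=(\CC_1^\rev,\bar\CC_2)$. The evaluation and coevaluation $1$-morphisms are built from the ``regular'' bimodule: the relevant $\FC\boxtimes\FC^\rev$-$\bk$-bimodule has first component $\CC_1$ (viewed with its $\CC_1\boxtimes_{\CC_2}\CC_1^\rev$-action, i.e.\ $\CM_0=\CC_1$, $\CC_2$ acting through the central structure) and second component the closed multi-fusion $\CC_2$-$\bar\CC_2$-bimodule $Z(\CC_1)$, or more precisely the one furnished by Corollary \ref{cor:inv-bimod}. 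The snake identities reduce, in the $\CC_1$-layer, to the standard fact that $\CC$ is self-dual as a $\CC$-$\CC$-bimodule (the internal-hom computation, Theorem \ref{thm:dual-mod-cat} with $\CM=\CC$), and in the $\CC_2$-layer to Corollary \ref{cor:inv-bimod}(2)--(3) together with Theorem \ref{thm:closed}. Remark \ref{rem:dual-obj} is exactly what makes this bookkeeping consistent.

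\emph{Adjoints of $1$-morphisms.} Given a $\FC$-$\FD$-bimodule $\FM=(\CM_0,\CM_1)$, the candidate right adjoint is $\FM^\op=(\CM_0^\op,\CM_1^\rev)$. On the $\CC_1$-layer, Theorem \ref{thm:dual-mod-cat} gives that $\CM_0^\op$ is both left and right dual to $\CM_0$ as a bimodule over $\CC_1\boxtimes_{\CC_2}\CM_1\boxtimes_{\CD_2}\CD_1^\rev$, with explicit duality data $d\mapsto -\odot d$ and $x\boxtimes y\mapsto [x,y]^*$. On the $\CC_2$-layer one needs that $\CM_1^\rev$ is appropriately dual to $\CM_1$ among closed multi-fusion bimodules; this is exactly the content of Corollary \ref{cor:inv-bimod}. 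Composing these layer-wise duality maps, and checking that the two composites $\FM\to\FM$ and $\FM^\op\to\FM^\op$ are equivalent to identities, amounts to combining the two snake identities at each layer — no new input is needed beyond Propositions \ref{prop:inv-equiv0} and \ref{prop:inv-equiv}, which translate ``equivalence of bimodule functors'' into a statement about the $\CM_0$-component alone.

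\emph{Adjoints of $2$- and $3$-morphisms.} For a bimodule functor $\FF=(F,\CF):\FM\to\FN$, the adjoint should have $\CF$ replaced by $\CF^\op$ (which is again invertible with the correct $\Fun$-category by Remark \ref{rem:bimod-func}) and $F$ replaced by a right/left adjoint of the module functor $F:\CM_0\to\CF\boxtimes_{\CN_1}\CN_0$ — such adjoints exist because all categories in sight are unitary, hence semisimple, so every module functor between them has a two-sided adjoint that is again a module functor. The unit and counit $3$-morphisms are then assembled from the unit/counit of the $\CF\dashv\CF^\op$ duality (a bimodule equivalence, so invertible) and the unit/counit of the $F$-adjunction; the triangle identities for $2$-morphisms follow from the triangle identities for $F$ together with those for the $\CF$-duality. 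Adjoints of $3$-morphisms (natural transformations) are the easiest case: since a natural transformation consists of a bimodule \emph{equivalence} $\xi':\CF\to\CG$ plus a genuine natural transformation $\xi$ of module functors, and $3$-morphisms in a $(4,3)$-category need only be invertible up to $4$-isomorphism, one simply checks — via the semisimplicity of the relevant functor categories — that every such $\xi$ is already invertible, so $3$-morphisms are all equivalences and there is nothing to prove. The plan is to organize all of this as a single induction on the dimension of the cell, citing at each stage the matching result from Section 2.

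\emph{Main obstacle.} The conceptual steps are all in place; the real work, and the main obstacle, is coherence bookkeeping: writing down the evaluation/coevaluation data at each layer so that the two snake (resp.\ triangle) identities, which are proved separately for the $\CC_1$-component and the $\CC_2$-component, glue to a single identity at the enriched level. In particular one must check that the $\CM_1$-layer duality maps from Corollary \ref{cor:inv-bimod}, which involve the closed structure $\phi_{\CM_1}:\bar\CC_2\boxtimes\CD_2\to Z(\CM_1)$, are compatible with the $\CM_0$-layer module structure $\CC_1\boxtimes_{\CC_2}\CM_1\boxtimes_{\CD_2}\CD_1^\rev$ — i.e.\ that the $\CC_2$- and $\CD_2$-actions appearing in both layers are literally the same. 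The figures in Section 3 (especially Figure \ref{fig:bimod-equiv2}) are the intended guide: each contraction of a region labelled $\CC_2,\CC_1$ (etc.) corresponds to one application of a Section-2 duality, and the claim is that the composite string diagram is isotopic to the trivial one. I expect that, once the graphical calculus is taken as justified by Theorems \ref{thm:closed} and \ref{thm:kz} and Corollaries \ref{cor:inv-bimod} and \ref{cor:inv-comp}, the verification is routine but lengthy, and the proof in the paper will proceed largely by exhibiting these diagrams.
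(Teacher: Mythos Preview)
Your overall architecture is right, but the paper's proof differs from yours at the crucial step, and one of your claims is incorrect.

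\textbf{Objects.} Your description of the coevaluation is off: the second component of the trivial $\FC$-$\FC$-bimodule, reinterpreted via Remark~\ref{rem:dual-obj} as a $\FC\boxtimes\FC^\rev$-$\bk$-bimodule, is $\CC_2$ itself (a closed multi-fusion $\CC_2\boxtimes\bar\CC_2$-$\bk$-bimodule since $Z(\CC_2)\simeq\CC_2\boxtimes\bar\CC_2$), not $Z(\CC_1)$. In general $Z(Z(\CC_1))\not\simeq\bar\CC_2\boxtimes\CC_2$, so your candidate fails the closedness condition. The paper simply invokes Remark~\ref{rem:dual-obj} and takes $\FC$ as both unit and counit; no snake computation at the $\CC_2$-layer is needed beyond this.

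\textbf{1-morphisms.} Here the paper takes a genuinely different route that sidesteps the coherence obstacle you flag. Rather than assemble layer-wise duality data and then check compatibility, the paper factors an arbitrary $\FM:\FC\leftarrow\FD$ as $\FM''\boxtimes_{\FC'}\FM'$, where $\FC'=(\CC_1\boxtimes_{\CC_2}\CM_1,\CD_2)$, $\FM'=(\CM_0,\CD_2)$, and $\FM''=(\CC_1\boxtimes_{\CC_2}\CM_1,\CM_1)$. By Proposition~\ref{prop:inv-mor}, $\FM''$ is \emph{invertible}, so its dual is its inverse and no work is required. This reduces the problem to the special case $\CC_2=\CM_1=\CD_2$, where the $\CF$ appearing in the definition of a bimodule functor can be taken to be $\CD_2$ itself, and the enriched duality data \emph{is literally} the ordinary duality data $(u,v)$ of Theorem~\ref{thm:dual-mod-cat}. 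Your direct approach would also work in principle, but the bookkeeping you anticipate (matching the $\CM_1$-layer maps from Corollary~\ref{cor:inv-bimod} with the $\CM_0$-layer module structure) is exactly what the factorization trick eliminates. This is the key idea you are missing.

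\textbf{2-morphisms.} Your treatment agrees with the paper's.

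\textbf{3-morphisms.} Your argument here is both unnecessary and wrong. It is unnecessary because for a symmetric monoidal $(4,3)$-category to have duals one only needs adjoints for $k$-morphisms with $k\le 2$ (the paper explicitly checks only objects, morphisms, and $2$-morphisms). It is wrong because a natural transformation $\xi:\FF\to\FG$ involves an ordinary natural transformation $\xi$ of module functors, and semisimplicity of the functor category does \emph{not} force such a $\xi$ to be invertible (e.g.\ $\xi=0$).
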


\begin{proof}
We need to show that every object, morphism, 2-morphism has left dual and right dual.

$(1)$ The dual of an object $\FC$ is $\FC^\rev$. The unit map $\FC^\rev\boxtimes\FC \leftarrow \bk$ and the counit map $\bk \leftarrow \FC\boxtimes\FC^\rev$ are given by the trivial $\FC$-$\FC$-bimodule $\FC$ (see Remark \ref{rem:dual-obj}).

\begin{figure}[h]
\begin{tikzpicture}[scale=.28,line width=1.2pt]
\fill[lightgray] (0,0) rectangle (12,9);
\draw (3,5) node{$\CD_2$};
\draw (9,5) node{$\CD_2$};
\draw[dashed] (6,9) -- (6,0);
\draw[dashed] (6,9) [->] -- (6,3) node[right]{$\CD_2$};
\draw (0,0) -- (12,0);
\draw (0,0) [->] -- (3,0) node[below]{$\CD_1$};
\draw (6,0) node{$\bullet$} node[below]{$\CD_1$};
\draw (6,0) [->] -- (9,0) node[below]{$\CD_1$};
\draw (17,5) node{$\xrightarrow{\quad u\quad}$};
\end{tikzpicture}
\quad\quad
\begin{tikzpicture}[scale=.28,line width=1.2pt]
\fill[lightgray] (0,0) rectangle (12,9);
\draw (2.5,6) node{$\CD_2$};
\draw[dashed] (4,0) -- (6,4);
\draw[dashed] (4,0) [->] -- (5,2) node[left]{$\CD_2$};
\draw[dashed] (6,4) -- (8,0);
\draw[dashed] (6,4) [->] -- (7,2) node[right]{$\CD_2$};
\draw[dashed] (6,9) -- (6,4);
\draw[dashed] (6,9) [->] -- (6,6.5) node[right]{$\CD_2$};
\draw (6,4) node{$\bullet$} node[right]{$\CD_2$};
\draw (0,0) -- (12,0);
\draw (0,0) [->] -- (1.5,0) node[below]{$\CD_1$};
\draw (4,0) node{$\bullet$} node[below]{$\CM_0^\op$};
\draw (0,0) [->] -- (6.3,0) node[below]{$\CC_1$};
\draw (8,0) node{$\bullet$} node[below]{$\CM_0$};
\draw (6,0) [->] -- (10.5,0) node[below]{$\CD_1$};
\end{tikzpicture}
\\~\\
\begin{tikzpicture}[scale=.28,line width=1.2pt]
\fill[lightgray] (0,0) rectangle (12,9);
\draw (1.5,5) node{$\CD_2$};
\draw (6,5) node{$\CD_2$};
\draw (10.5,5) node{$\CD_2$};
\draw[dashed] (4,9) -- (4,0);
\draw[dashed] (4,9) [->] -- (4,3) node[right]{$\CD_2$};
\draw[dashed] (8,0) -- (8,9);
\draw[dashed] (8,0) [->] -- (8,3) node[right]{$\CD_2$};
\draw (0,0) -- (12,0);
\draw (0,0) [->] -- (1.5,0) node[below]{$\CC_1$};
\draw (4,0) node{$\bullet$} node[below]{$\CM_0$};
\draw (0,0) [->] -- (6,0) node[below]{$\CD_1$};
\draw (8,0) node{$\bullet$} node[below]{$\;\CM_0^\op$};
\draw (6,0) [->] -- (10.5,0) node[below]{$\CC_1$};
\draw (17,5) node{$\xrightarrow{\quad v\quad}$};
\end{tikzpicture}
\quad\quad
\begin{tikzpicture}[scale=.28,line width=1.2pt]
\fill[lightgray] (0,0) rectangle (12,9);
\draw (2.5,3) node{$\CD_2$};
\draw[dashed] (4,9) -- (6,4);
\draw[dashed] (4,9) [->] -- (5,6.5) node[left]{$\CD_2$};
\draw[dashed] (6,4) -- (8,9);
\draw[dashed] (6,4) [->] -- (7,6.5) node[right]{$\CD_2$};
\draw[dashed] (6,4) -- (6,0);
\draw[dashed] (6,4) [->] -- (6,2) node[right]{$\CD_2$};
\draw (6,4) node{$\bullet$} node[right]{$\CD_2$};
\draw (0,0) -- (12,0);
\draw (0,0) [->] -- (3,0) node[below]{$\CC_1$};
\draw (6,0) node{$\bullet$} node[below]{$\CC_1$};
\draw (6,0) [->] -- (9,0) node[below]{$\CC_1$};
\end{tikzpicture}
\caption{These two figures depict the unit map $(u,\CD_2):\FD\to\FM^\op\boxtimes_\FC\FM$ and the counit map $(v,\CD_2):\FM\boxtimes_\FD\FM^\op\to\FC$ that exhibit the duality between the  bimodules $\FM=(\CM_0,\CD_2)$ and $\FM^\op=(\CM_0^\op,\CD_2^\rev)$.}
\label{fig:dual-bimod}
\end{figure}

$(2)$ Let $\FM:\FC\leftarrow\FD$ be a morphism. We claim that $\FM^\op:\FD\leftarrow\FC$ is right dual to $\FM$. Indeed, in the special case where $\CC_2=\CM_1=\CD_2$, it suffices to take $(u,\CD_2):\FD\to\FM^\op\boxtimes_\FC\FM$ as unit map and take $(v,\CD_2):\FM\boxtimes_\FD\FM^\op\to\FC$ as counit map to exhibit the duality (see Figure \ref{fig:dual-bimod}), where $u:\CD_1\to\CM_0^\op\boxtimes_{\CC_1}\CM_0$ and $v:\CM_0\boxtimes_{\CD_1}\CM_0^\op\to\CC_1$ are the ordinary bimodule functors given in Theorem \ref{thm:dual-mod-cat}.
For the general case, we have a commutative diagram:
$$
\xymatrix@!C=10ex{
  & \FC':=(\CC_1\boxtimes_{\CC_2}\CM_1,\CD_2) \ar[ld]_{\FM'':=(\CC_1\boxtimes_{\CC_2}\CM_1,\CM_1)} \\
  \FC=(\CC_1,\CC_2) && \ar[ll]_-{\FM=(\CM_0,\CM_1)} \ar[lu]_{\FM':=(\CM_0,\CD_2)} \FD=(\CD_1,\CD_2). \\
}
$$
Moreover, $\FM''^\op$ is inverse to $\FM''$ by Proposition \ref{prop:inv-mor}. Replacing $\FC$ and $\FM$ by $\FC'$ and $\FM'$, respectively, we reduce the problem to the special case.
Since $(\FM^\op)^\op=\FM$, $\FM^\op$ is also left dual to $\FM$.

$(3)$ Let $\FF:\FM\to\FN$ be a 2-morphism between a pair of morphisms $\FM,\FN:\FC\leftarrow\FD$, which by definition is a left
module functor $F: \CM_0 \to \CF\boxtimes_{\CN_1}\CN_0$. The right dual of $\FF$ is the 2-morphism $\FG:\FN\to\FM$ given by the left module functor
$$G: \CN_0 \simeq \CF^\op\boxtimes_{\CM_1}\CF\boxtimes_{\CN_1}\CN_0 \xrightarrow{\Id_{\CF^\op}\boxtimes_{\CM_1}F^*} \CF^\op\boxtimes_{\CM_1}\CM_0
$$
where $F^*$ is the right adjoint functor to $F$. The unit map $\Id_\FM \to\FG\circ\FF$ and the counit map $\FF\circ\FG\to\Id_\FN$ are induced by the unit map $\Id \to F^*\circ F$ and the counit map $F\circ F^*\to\Id$, respectively. Similarly, $\FF$ has a left dual.
\end{proof}

\begin{cor}
A morphism $\FM:\FC\leftarrow\FD$ is an equivalence if and only if the $\CC_1\boxtimes_{\CC_2}\CM_1$-$\CD_1$-bimodule $\CM_0$ is invertible. In this case, $\FM^\op$ is inverse to $\FM$.
\end{cor}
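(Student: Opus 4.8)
The plan is to derive the corollary from the adjunction $\FM\dashv\FM^\op$ constructed in the proof of Theorem~\ref{thm:dual}(2), together with the criterion of Proposition~\ref{prop:inv-equiv}, after repeating the reduction performed there. The first ingredient is the standard fact that for an adjunction $f\dashv g$ with unit $\eta$ and counit $\varepsilon$, the $1$-morphism $f$ is an equivalence if and only if $\eta$ and $\varepsilon$ are isomorphisms, in which case $g$ is automatically a two-sided inverse of $f$. Applied to $\FM\dashv\FM^\op$, whose unit $\FD\to\FM^\op\boxtimes_\FC\FM$ and counit $\FM\boxtimes_\FD\FM^\op\to\FC$ are the bimodule functors constructed in the proof of Theorem~\ref{thm:dual}(2), this reduces the corollary to the assertion that these two bimodule functors are bimodule equivalences if and only if $\CM_0$ is an invertible $(\CC_1\boxtimes_{\CC_2}\CM_1)$-$\CD_1$-bimodule.

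Next I would repeat the reduction from the proof of Theorem~\ref{thm:dual}(2): factor $\FM=\FM''\boxtimes_{\FC'}\FM'$ with $\FC'=(\CC_1\boxtimes_{\CC_2}\CM_1,\CD_2)$, $\FM'=(\CM_0,\CD_2)$ and $\FM''=(\CC_1\boxtimes_{\CC_2}\CM_1,\CM_1)$. By Proposition~\ref{prop:inv-mor}, $\FM''$ is a bimodule equivalence with inverse $\FM''^\op$; hence $\FM$ is an equivalence if and only if $\FM'$ is, and $\FM^\op$ is inverse to $\FM$ if and only if $\FM'^\op$ is inverse to $\FM'$. Moreover $\CM_0$, with its $(\CC_1\boxtimes_{\CC_2}\CM_1)$-$\CD_1$-bimodule structure, is precisely the $0$-component of $\FM'$, whose $1$-component is the trivial closed multi-fusion bimodule $\CD_2$. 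I am therefore reduced to the case $\CC_2=\CM_1=\CD_2$, renaming $\CC_1\boxtimes_{\CC_2}\CM_1$ back to $\CC_1$.

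In this case the unit and counit of $\FM\dashv\FM^\op$ are $(u,\CD_2)$ and $(v,\CD_2)$, where $u\colon\CD_1\to\CM_0^\op\boxtimes_{\CC_1}\CM_0$ and $v\colon\CM_0\boxtimes_{\CD_1}\CM_0^\op\to\CC_1$ are the ordinary bimodule functors of Theorem~\ref{thm:dual-mod-cat}. Here the invertible-bimodule component of each is $\CD_2$ itself, so that $\CF\boxtimes_{\CN_1}\CN_0$ identifies canonically with $\CM_0^\op\boxtimes_{\CC_1}\CM_0$, respectively with $\CC_1$, and the underlying left module functor of $(u,\CD_2)$, respectively $(v,\CD_2)$, is exactly $u$, respectively $v$. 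Proposition~\ref{prop:inv-equiv} then says that $(u,\CD_2)$ and $(v,\CD_2)$ are bimodule equivalences if and only if $u$ and $v$ are equivalences of ordinary bimodules. Since $u$ and $v$ are the duality maps exhibiting $\CM_0^\op$ as the right dual of $\CM_0$ in Theorem~\ref{thm:dual-mod-cat}, this holds if and only if $\CM_0$ is an invertible $\CC_1$-$\CD_1$-bimodule. Chaining these equivalences yields the first assertion of the corollary, and the second assertion was recorded in the first step.

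The one inference that needs genuine care is this last one, where I pass between ``$\CM_0$ is invertible'' and ``the specific canonical maps $u,v$ of Theorem~\ref{thm:dual-mod-cat} are equivalences'': the definition of an invertible bimodule only demands the existence of \emph{some} duality maps that are equivalences. This is handled by the uniqueness up to coherent equivalence of right duals of bimodules, together with the fact that an adjunction between a bimodule and a two-sided inverse is necessarily an adjoint equivalence, so that its unit and counit -- which by that uniqueness may be taken to be $u$ and $v$ -- are equivalences. All remaining steps are routine bookkeeping with the factorization and a single application of Proposition~\ref{prop:inv-equiv}.
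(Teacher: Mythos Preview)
Your proof is correct and follows the same route as the paper, which simply records that the corollary ``is clear from Part~(2) of the proof of Theorem~\ref{thm:dual}.'' You have carefully unpacked this one-liner: the factorization $\FM=\FM''\boxtimes_{\FC'}\FM'$ with $\FM''$ invertible reduces to the case $\CC_2=\CM_1=\CD_2$, where the unit and counit of $\FM\dashv\FM^\op$ are $(u,\CD_2)$ and $(v,\CD_2)$, and Proposition~\ref{prop:inv-equiv} converts invertibility of these enriched bimodule functors into invertibility of the ordinary $u,v$ from Theorem~\ref{thm:dual-mod-cat}. Your explicit treatment of the subtlety---that invertibility of $\CM_0$ a priori only furnishes \emph{some} invertible duality data, whereas you need the specific $u,v$ to be equivalences---is a welcome addition; it is resolved exactly as you say, by the standard fact that any adjunction whose left adjoint is an equivalence is automatically an adjoint equivalence, applied now in the Morita bicategory of UMFC's.
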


\begin{proof}
This is clear from Part $(2)$ of the proof of Theorem \ref{thm:dual}.
\end{proof}

The following corollary generalizes a similar result for unitary fusion categories \cite{Mu1,eno2008}.

\begin{cor}
Two enriched IUMFC's $\FC,\FD$ are equivalent in $\mathbf{IUMFC}^{\mathrm{en}}$ if and only if $Z(\FC^\sharp) \simeq Z(\FD^\sharp)$ as UMTC's, where $Z(\FC^\sharp),Z(\FD^\sharp)$ are the centralizers of $\CC_2,\CD_2$ in $Z(\CC_1),Z(\CD_1)$, respectively (see Remark \ref{rem:enrich}).
\end{cor}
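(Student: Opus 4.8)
The plan is to bootstrap from the unenriched theory, using three ingredients: (i) the preceding corollary, which says $\FM=(\CM_0,\CM_1)$ is an equivalence exactly when $\CM_1$ is a closed multi-fusion $\CC_2$-$\CD_2$-bimodule and $\CM_0$ is an invertible $\CC_1\boxtimes_{\CC_2}\CM_1$-$\CD_1$-bimodule; (ii) Corollary \ref{cor:uib} (hence the fully-faithfulness of $Z$, Theorem \ref{thm:kz}), which says an invertible bimodule between IUMFC's exists exactly when their Drinfeld centers are braided equivalent; and (iii) the following computation, which is where I expect the real work to lie.

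\medskip
\noindent\textbf{Key Lemma.} \emph{For an enriched IUMFC $\FC=(\CC_1,\CC_2)$ and a closed multi-fusion $\CC_2$-$\CD_2$-bimodule $\CM_1$, the UMFC $\CC_1\boxtimes_{\CC_2}\CM_1$ is an IUMFC and}
$$Z(\CC_1\boxtimes_{\CC_2}\CM_1)\ \simeq\ \CD_2\boxtimes Z(\FC^\sharp)$$
\emph{as UMTC's, compatibly with the embeddings of $\CD_2$ (the one on the right evident, the one on the left induced by $\CD_2\hookrightarrow Z(\CM_1)$); equivalently, the centralizer of $\CD_2$ in $Z(\CC_1\boxtimes_{\CC_2}\CM_1)$ is $Z(\FC^\sharp)$.}
\medskip

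To prove the Key Lemma I would feed $Z(\CC_1)\simeq\CC_2\boxtimes Z(\FC^\sharp)$ (Remark \ref{rem:enrich}) and $Z(\CM_1)\simeq\bar\CC_2\boxtimes\CD_2$ into a formula for the Drinfeld center of a tensor product over a braided fusion category, which one can extract from \eqref{eq:MMNN} together with Müger's factorization $\CC\simeq\CE\boxtimes\CE'$ for a fully faithful braided functor of UMTC's \cite[Theorem 4.2]{Mu3}, applied to $\CC_2\hookrightarrow Z(\CC_1)$ and $\CD_2\hookrightarrow Z(\CM_1)$; alternatively, it is the assertion that changing the enrichment of $\FC^\sharp$ along the invertible domain wall $\CM_1$ preserves the Drinfeld center of the enriched monoidal category, which should follow from \cite{KZ2}. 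Once $\CD_2\boxtimes Z(\FC^\sharp)$ is identified as the center, it is a fusion category, which forces $\CC_1\boxtimes_{\CC_2}\CM_1$ to be indecomposable. I expect this lemma to be the main obstacle.

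Granting the Key Lemma, the ``only if'' direction runs as follows. If $\FM=(\CM_0,\CM_1):\FC\leftarrow\FD$ is an equivalence then, by the preceding corollary, $\CM_1$ is a closed multi-fusion $\CC_2$-$\CD_2$-bimodule and $\CM_0$ is an invertible $\CC_1\boxtimes_{\CC_2}\CM_1$-$\CD_1$-bimodule, so $Z(\CC_1\boxtimes_{\CC_2}\CM_1)\simeq Z(\CD_1)$ as UMTC's; since $\CM_0$ is a module over the $\CD_2$-\emph{balanced} product $\CC_1\boxtimes_{\CC_2}\CM_1\boxtimes_{\CD_2}\CD_1^\rev$, the two actions of $\CD_2$ on $\CM_0$ coincide, so this equivalence intertwines the embedding $\CD_2\hookrightarrow Z(\CD_1)$ with the embedding $\CD_2\hookrightarrow Z(\CC_1\boxtimes_{\CC_2}\CM_1)$ of the Key Lemma. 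Centralizing $\CD_2$ on both sides and applying the Key Lemma on the left and Remark \ref{rem:enrich} on the right gives $Z(\FC^\sharp)\simeq Z(\FD^\sharp)$.

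For the ``if'' direction, assume $Z(\FC^\sharp)\simeq Z(\FD^\sharp)$. As $Z(\CC_1)\simeq\CC_2\boxtimes Z(\FC^\sharp)$ is a Drinfeld center it is trivial in the Witt group of UMTC's, so $[\CC_2]=-[Z(\FC^\sharp)]$, and likewise $[\CD_2]=-[Z(\FD^\sharp)]$, whence $[\CC_2]=[\CD_2]$ and $\bar\CC_2\boxtimes\CD_2$ is Witt trivial; by \cite{dmno} it is then the Drinfeld center of some IUMFC $\CM_1$, i.e.\ $\CM_1$ is a closed multi-fusion $\CC_2$-$\CD_2$-bimodule. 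By the Key Lemma, $\CC_1\boxtimes_{\CC_2}\CM_1$ is an IUMFC with $Z(\CC_1\boxtimes_{\CC_2}\CM_1)\simeq\CD_2\boxtimes Z(\FC^\sharp)\simeq\CD_2\boxtimes Z(\FD^\sharp)\simeq Z(\CD_1)$, the last equivalence being $\CD_2$-compatible (by Müger both sides are $\CD_2$ tensored with the common centralizer $Z(\FD^\sharp)$). A $\CD_2$-equivariant refinement of Corollary \ref{cor:uib} then yields an invertible $\CC_1\boxtimes_{\CC_2}\CM_1$-$\CD_1$-bimodule $\CM_0$ whose two $\CD_2$-actions agree, so $\FM=(\CM_0,\CM_1)$ is a bona fide $\FC$-$\FD$-bimodule, and it is an equivalence by the preceding corollary; hence $\FC\simeq\FD$ in $\mathbf{IUMFC}^{\mathrm{en}}$. (One may also reach this by first using Proposition \ref{prop:inv-mor} to replace $\FC$ with $(\CC_1\boxtimes_{\CC_2}\CM_1,\CD_2)$, reducing to the case where the two enriched IUMFC's have the same second entry.)
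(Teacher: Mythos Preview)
Your overall strategy matches the paper's: isolate the computation $Z(\CC_1\boxtimes_{\CC_2}\CM_1)\simeq Z(\FC^\sharp)\boxtimes\CD_2$, then combine with the preceding corollary and the fully-faithfulness of $Z$. Two points of comparison are worth noting. First, your Key Lemma has a one-line proof that you miss: since $Z(\CC_1)\simeq\CC_2\boxtimes Z(\FC^\sharp)$, the IUMFC $\CC_1$ is itself a closed multi-fusion $\overline{Z(\FC^\sharp)}$-$\CC_2$-bimodule, so Theorem~\ref{thm:closed} applied to $\CC_1$ and $\CM_1$ gives $Z(\CC_1\boxtimes_{\CC_2}\CM_1)\simeq Z(\FC^\sharp)\boxtimes\CD_2$ immediately---no need to extract a general center-of-tensor-product formula from \eqref{eq:MMNN}. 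Second, in the ``if'' direction the paper avoids the Witt-group detour entirely by writing down an explicit candidate $\CM_1=\CC_1^\rev\boxtimes_{\overline{Z(\FC^\sharp)}}\CD_1$; this is a closed multi-fusion $\CC_2$-$\CD_2$-bimodule directly by Theorem~\ref{thm:closed} again (viewing $\CC_1^\rev$ as a closed $\CC_2$-$\overline{Z(\FC^\sharp)}$-bimodule and $\CD_1$ as a closed $\overline{Z(\FD^\sharp)}$-$\CD_2$-bimodule), which is both shorter and more constructive than invoking \cite{dmno}. On the other hand, your explicit flag about needing the two $\CD_2$-actions on $\CM_0$ to agree (so that $\CM_0$ descends to a module over the $\CD_2$-balanced product) is a point of genuine care that the paper's proof passes over silently; it is handled by the fact that the braided equivalence $Z(\CC_1\boxtimes_{\CC_2}\CM_1)\simeq Z(\CD_1)$ used in Corollary~\ref{cor:uib} is chosen to preserve $\CD_2$, but the paper does not say so.
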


\begin{proof}
Suppose there is an equivalence $\FM:\FC\leftarrow\FD$. Note that $\CC_1$ is a closed multi-fusion $\overline{Z(\FC^\sharp)}$-$\CC_2$-bimodule, thus $Z(\CC_1\boxtimes_{\CC_2}\CM_1) \simeq Z(\FC^\sharp)\boxtimes\CD_2$. The invertible $\CC_1\boxtimes_{\CC_2}\CM_1$-$\CD_1$-bimodule $\CM_0$ induces an equivalence $Z(\FC^\sharp)\boxtimes\CD_2 \simeq Z(\CD_1)$ which preserves $\CD_2$. Thus $Z(\FC^\sharp) \simeq Z(\FD^\sharp)$.

Conversely, suppose $Z(\FC^\sharp) \simeq Z(\FD^\sharp)$. Let $\CM_1 = \CC_1^\rev\boxtimes_{\overline{Z(\FC^\sharp)}}\CD_1$. Note that $\CM_1$ is a closed multi-fusion $\CC_2$-$\CD_2$-bimodule and $Z(\CC_1\boxtimes_{\CC_2}\CM_1) \simeq Z(\FC^\sharp)\boxtimes\CD_2 \simeq Z(\CD_1)$. Therefore, there exists an invertible $\CC_1\boxtimes_{\CC_2}\CM_1$-$\CD_1$-bimodule $\CM_0$ such that the pair $(\CM_0,\CM_1)$ defines an equivalence $\FC\simeq\FD$.
\end{proof}

\begin{rem}
Theorem \ref{thm:dual} and its corollaries can be formulated and proved in terms of 1-category theory without referring to the higher category $\mathbf{IUMFC}^{\mathrm{en}}$.
\end{rem}

According to the cobordism hypothesis \cite{BD,Lu}, every object $\FC$ of $\mathbf{IUMFC}^{\mathrm{en}}$ gives rise to an extended framed 3D TQFT $\CZ_\FC$. We would like to examine its values on closed manifolds.

The value $\CZ_\FC(S^1)$ on a circle
is the $\bk$-$\bk$-bimodule
\begin{align*}
\FC^\op\boxtimes_{\FC\boxtimes\FC^\rev}\FC
& = (\CC_1^\op,\CC_2^\rev)\boxtimes_{(\CC_1,\CC_2)\boxtimes(\CC_1^\rev,\bar\CC_2)}(\CC_1,\CC_2) \\
& = (\CC_1^\op\boxtimes_{\CC_1\boxtimes\CC_1^\rev}\CC_1,\CC_2^\rev\boxtimes_{\CC_2\boxtimes\bar\CC_2}\CC_2) \\
& \simeq (\Fun_{\CC_1\boxtimes\CC_1^\rev}(\CC_1,\CC_1),\Fun(\CC_2,\CC_2)) \\
& \simeq (Z(\CC_1),\Fun(\CC_2,\CC_2)) \\
& \simeq (Z(\FC^\sharp),\bk)\boxtimes(\CC_2,\Fun(\CC_2,\CC_2)) \\
& \simeq (Z(\FC^\sharp),\bk)
\end{align*}
where the first $\simeq$ is due to \eqref{eq:xRy} and Corollary \ref{cor:inv-bimod}(1), and the last $\simeq$ is due to Example \ref{exam:trivial}. This result agrees with the
general phenomenon that the value of an extended TQFT on a circle is the ``center" of that on a point.

The value $\CZ_\FC(\Sigma)$ on a closed surface $\Sigma$ is an $\bk$-$\bk$-bimodule functor $\bk\to\bk$. By definition, it is encoded by a pair $(u_\Sigma,\bk_\Sigma)$ where $\bk_\Sigma$ is a unitary category equivalent to $\bk$ and $u_\Sigma:\bk\to \bk_\Sigma$ is a functor or, equivalently, an object of $\bk_\Sigma$.

The value $\CZ_\FC(M)$ on a closed 3-manifold $M$ is a natural transformation $\Id_\bk\to\Id_\bk$ of $\bk$-$\bk$-bimodule functors. By definition, it is encoded by a pair $(v_M,\C_M)$ where $\C_M:\bk\to\bk$ is an equivalence or, equivalently, a one-dimensional Hilbert space and $v_M:\Id_\bk\to \C_M$ is a natural transformation or, equivalently, a vector in $\C_M$.

\smallskip

Note that, a unitary fusion category $\CC$ can be regarded as an enriched IUMFC $(\CC,\bk)$. So, $\mathbf{IUMFC}^{\mathrm{en}}$ contains the symmetric monoidal 3-category formed by unitary fusion categories, their ordinary bimodules, bimodule functors and natural transformations. Moreover, this symmetric monoidal 3-subcategory is closed under duality. Therefore, $\CZ_{(\CC,\bk)}$ is nothing but the extended TQFT given in \cite{dss1} which is believed to be an extension of the Turaev-Viro invariant associated to $\CC$. In another word, the extended Turaev-Viro TQFT $\CZ^{TV}_\CC$ is included here as a special case.


\smallskip

Now we focus on the special case $\FC = (\CC,\bar\CC)$ where $\CC$ is a UMTC. On the one hand side, we have an extended framed 3D TQFT $\CZ_\CC$ given by the fully dualizable object $\FC$.
On the other hand side, there is a 1-2-3-dimensional Reshetikhin-Turaev TQFT $\CZ^{RT}_\CC$ associated to the UMTC $\CC$ \cite{Tu,BK} as well as an invertible extended 4D TQFT $\CA_\CC$ associated to the same UMTC $\CC$ \cite{Fr3}.

We conjecture that the 1-2-3-dimensional theory of $\CZ_\CC$ is a combination of those of $\CZ^{RT}_\CC$ and $\CA_\CC$. In particular, this provides a way to extend the Reshetikhin-Turaev TQFT down to dimensional zero.
In fact, as we have seen, the values of $\CZ_\CC$ on these dimensions are always encoded by pairs, and the examples we have examined match perfectly with $\CZ^{RT}_\CC$ and $\CA_\CC$. That is, the values $\Fun(\CC,\CC),\bk_\Sigma,\C_M$ have the same form as those of $\CA_\CC$ and the values $Z(\FC^\sharp)\simeq\CC,u_\Sigma,v_M$ have the same form as those of $\CZ^{RT}_\CC$.

Another evidence of this conjecture arises from the double theory $\CZ_{\CC\boxtimes\bar\CC}$. By Example \ref{exam:double}(2), $(\CC\boxtimes\bar\CC,\bar\CC\boxtimes\CC) \simeq (\CC,\bk)$ as objects of $\mathbf{IUMFC}^{\mathrm{en}}$. Therefore, the double theory $\CZ_{\CC\boxtimes\bar\CC}$ is equivalent to the extended Turaev-Viro TQFT $\CZ^{TV}_\CC$ associated to $\CC$. This also matches perfectly with the fact that the double theory $\CZ^{RT}_{\CC\boxtimes\bar\CC}$ is equivalent to $\CZ^{TV}_\CC$ \cite{Ba2} (while the double theory $\CA_{\CC\boxtimes\bar\CC}$ is trivial).

\smallskip

Moreover, we conjecture that every object of $\mathbf{IUMFC}^{\mathrm{en}}$ can be promoted canonically to a $SO(3)$-fixed point. That is, $\CZ_\FC$ defines an extended 3D TQFT without framing anomaly. There are several evidences for this conjecture. First, in the special case $\CZ_\CC$, there have already been such data as $\bk_\Sigma$, $\C_M$ to address the framing anomaly of the Reshetikhin-Turaev TQFT.

Secondly, recall that, although $\CZ_\FC(S^1)$ is equivalent to one of its factor $(Z(\FC^\sharp),\bk)$, it contains another factor $(\CC_2,\Fun(\CC_2,\CC_2)) \simeq \bk$ which does contribute to higher dimensional cobordisms and yields such data as $\bk_\Sigma$, $\C_M$. From the physical point of view, these two factors of $\CZ_\FC(S^1)$ constitute the chiral and anti-chiral parts of the whole theory so that their framing anomalies are canceled.

Thirdly, the symmetric monoidal (4,3)-category $\mathbf{IUMFC}^{\mathrm{en}}$ has a very nice behavior under duality, due to the unitarity condition we imposed in the construction. For example, the dual of a $\FC$-$\FD$-bimodule $\FM$ is given by $\FM^\op$ no matter we treat it as a $\FC$-$\FD$-bimodule or an $\bk$-$\FC^\rev\boxtimes\FD$-bimodule instead. This fact would have sufficed to promote every object of $\mathbf{IUMFC}^{\mathrm{en}}$ canonically to a $SO(2)$-fixed point. However, promoting to a $SO(3)$-fixed point is a much more subtle problem. See \cite{dss1} for a similar conjecture and discussions therein.

\begin{rem}
The unitarity is only used to eliminate the ambiguity of left/right dual of an object in a multi-fusion category. All results are equally true if we replace UMFC's and UMTC's by spherical multi-fusion categories and modular tensor categories. It is also possible to generalize these results to multi-fusion categories and nondegenerate braided fusion categories by taking care of the difference between left/right dual.
\end{rem}


\begin{rem}
In a joint work with Liang Kong, we will explain how to use enriched fusion categories to describe both gapped edges and gapless edges of 2+1D topological orders. Roughly speaking, a gapped edge supports a topological field theory which is described mathematically by a unitary fusion category or, equivalently, a unitary fusion category enriched in $\bk$. However, a gapless edge supports a conformal field theory which is described mathematically by a unitary fusion category enriched in the module category of a unitary vertex operator algebra. 

This also shed light on Chern-Simons theory with compact gauge group which is used to study the effective theory of 2+1D topological orders. If the gauge group is finite, the boundary theory is topological hence is described by a unitary fusion category as it was done in Freed's work \cite{Fr2} (see also \cite{FHLT}). If the gauge group is not finite, the boundary theory is a conformal field theory hence might be described by an enriched unitary fusion category. This idea is very close to that in Henriques' work \cite{He1,He2}. It would be very interesting to clarify the relation between  categories of solitons arising from conformal net and enriched fusion categories.
\end{rem}

\end{document}